\documentclass[11pt]{amsart}
\usepackage{amsthm}
\usepackage{amsfonts}
\usepackage{amssymb}
\usepackage{amsmath}
\usepackage{mathrsfs}
\usepackage{xcolor}
\usepackage{hyperref}
\usepackage[utf8]{inputenc}   
\usepackage[T1]{fontenc}    
\usepackage{cite}
\begin{document}

\large


\theoremstyle{plain}
\newtheorem{theorem}{Theorem}[section]

\newtheorem{prop}[theorem]{Proposition}

\newtheorem{lemma}{Lemma}[section]
\newtheorem{corollary}{Corollary}

\theoremstyle{definition}
\newtheorem{definition}{Definition}[section]
\newtheorem{remark}{\textnormal{\textbf{Remark}}}
\newtheorem{proposition}{\textnormal{\textbf{Proposition}}}
\newtheorem*{acknowledgement}{\textnormal{\textbf{Acknowledgement}}}
\theoremstyle{remark}
\newtheorem{example}{Example}
\numberwithin{equation}{section}
\newtheorem*{question}{\textnormal{\textbf{Open question}}}

\title[Unlocking Novel Topological Structures via Rough Families]{Unlocking Novel Topological Structures via Rough Families}

\author[Sourav Mandal, Lakshmi Kanta Dey, Pratikshan Mondal]{Sourav Mandal$^1$, Lakshmi Kanta Dey$^{1,*}$, Pratikshan Mondal$^2$}

\thanks{$^*$ Corresponding author}

\address{$^1$Department of Mathematics, National Institute of Technology Durgapur, 713209, West Bengal, India.}

\address{$^2$Department of Mathematics, Acharya Brojendra Nath Seal College, Cooch Behar, 736101, West Bengal, India.}

 \thanks{\emph{E-mail}: souragence@gmail.com (S. Mandal), lakshmikdey@yahoo.co.in (L. K. Dey), real.analysis77@gmail.com (P. Mondal)}
 \thanks{ORCID: 0000-0002-6703-1958 (S. Mandal), 0000-0001-5389-6048 (L. K. DEY), 0000-0002-9678-9610 (P. Mondal)}
\date{}

\subjclass[2020]{54A05, 54A20, 54C08, 54D05, 54D30}

\keywords{Rough family, rough open set, rough topology, rough homeomorphism, rough compactness, rough connectedness.}

\date{}

\thanks{Research of the first author is supported by the
Council of Scientific and Industrial Research, Government of India (File No:
09/0973(25105)/2025-EMR-I)}

\setcounter {page}{1}

\maketitle

\begin{abstract}
    
Very recently, the notion of rough family has been introduced in [Leonetti, P., J. Convex Anal. 32(4):1083-1090, 2025] to explore rough ideal convergence in topological spaces where the limit of a sequence may not be unique. This raises the question of whether $T_2$ topological spaces can be characterized using rough families. In this article, we prove that a topological space is $T_2$ if and only if it can never be a rough topological space. In this context, we first introduce the notions of rough interior and rough closure of a set from the perspective of a rough family, which leads to the definition of rough open sets (rough closed sets). As a consequence, we generate a new topology, termed rough topology, as well as rough homeomorphism. Our main contribution presents the novelty of this new class; in particular, we explicitly construct several examples which ensure that two non-homeomorphic spaces can be roughly homeomorphic under certain roughness. Additionally, we extend the concepts of compactness as well as connectedness, where our findings diverge from existing literature in these areas, in a nutshell, providing new insights and perspectives.

\end{abstract}

\section{Introduction}	   
     		
    In 2001, Phu \cite{Phu2001} proposed a new concept called rough convergence as a generalization of the classical notion of convergence in a normed linear space. After that, various mathematicians expanded this notion in several directions, see e.g. \cite{Aytar2008(2), Aytar2008(1), Pal2013, Phu2003, Rahaman} and references therein. After a few gaps, in 2021 Ghosal et al. \cite{SGSM1, SGSM2} introduced the concept of rough weighted ideal convergence in locally solid Riesz spaces. Meanwhile, in 2025, Leonetti \cite{Leonetti2023} proposed the idea of rough ideal convergence specifically in topological spaces by employing a rough family. In their work, Ghosal et al.~\cite{SGSM1, SGSM2} and Leonetti~\cite{Leonetti2023} both replaced the roughness degree with a neighborhood structure instead of a nonnegative real number. Here, we present the definition of rough ideal convergence proposed by Leonetti \cite{Leonetti2023}.

    \begin{definition}[\cite{Leonetti2023}, Definition 1.1]
         Let $x=\{x_n\}_{n\in \mathbb{N}}$ be a sequence in a topological space $(X, \tau).$ Also, let $\mathscr{F}= \{F_\eta: \eta\in X\}$ be a rough family, i.e., a collection of subsets of $X$ with the property that $\eta \in F_\eta$ for all $\eta\in X.$ Then $x=\{x_n\}_{n\in \mathbb{N}}$ is said to be $\mathcal{I}$-convergent to $\eta \in X$ with roughness $\mathscr{F},$ shortened as $(\mathcal{I}, \mathscr{F}, \tau)$-$\lim x_n = \eta,$ if $\{n \in \omega : x_n \notin U\}\in\mathcal{I}$ for all $\tau$-open sets $U \subseteq X$ such that $F_\eta\subseteq U$ where $\omega$ denotes the set of all nonnegative integers.              
    \end{definition}
    For $\mathcal{I}:=\mathcal{I}_{fin}=\{A\subset\mathbb{N}: A~\text{is a finite set}\}$, $\mathcal{I}$-convergence with roughness $\mathscr{F}$ corresponds to the conventional rough convergence in a topological space. Furthermore, when $\mathscr{F}:=\mathscr{F^\#}=\{\{\eta\}: \eta\in X\}$, rough convergence simplifies to the usual notion of convergence in topological spaces. For a comprehensive understanding of these areas, we refer to \cite{Leonetti2023, Leonetti2026} and the references cited therein. From now towards, we consider $\mathscr{F}=\{F_x: x\in X\}$, $\mathscr{R}=\{R_x: x\in X\}$, and $\mathscr{L}=\{L_y:y\in Y\}$ as rough families in various spaces as required. Usually, it is assumed that $\mathscr{F} \neq \mathscr{F^\#}$ unless stated otherwise. We write "$\mathscr{F}\leq \mathscr{R}$" to signify that for all $x\in X$, $F_x\subset R_x$. Thus, $\mathscr{F}^\#\leq \mathscr{F}$ holds for any rough family $\mathscr{F}$.\\

    Now, for $\mathscr{F}\neq \mathscr{F^\#}$, the limit of a sequence may not be unique. This raises the question of whether $T_2$ topological spaces can be characterized using rough families. The primary objective of this study is to give a positive answer to this query. Moreover, our aim to explore whether the notion of rough families can be used to generate novel topological concepts that extend existing notions. To achieve this, the paper is organized into four sections. In Section \ref{Sec1}, we first introduce the concepts of rough interior and rough closure of a set. In general, rough closure may not be closed; likewise, rough interior may not be open as well. Also, rough closure always contains more than one element under certain roughness. Furthermore, Section \ref{Sec1} presents the ideas of rough open sets and rough closed sets. 
    Concurrently, we propose the notion of rough continuity in Section \ref{Sec3} and establish that continuity is a particular instance of rough continuity. Moreover, the notion of rough homeomorphism has been introduced to show that two non-homeomorphic spaces may be equivalent in the sense of rough homeomorphism. In Section \ref{Sec4}, based on the preceding notion, we develop a new topology, termed a rough topology, within a given topological space. Although the rough topology is coarser than the original topology, it satisfies additional topological properties that the original topology does not. We prove that a topological space is $T_2$ if and only if it can never be a rough topological space. We provide an approach using rough homeomorphism and rough topology to establish the homeomorphism between two spaces. Finally, in Section \ref{Sec5}, we extend the concepts of compactness and connectedness to generalized notions, namely rough compactness and rough connectedness. Numerous established results regarding compactness fail to apply within the domain of rough compactness. In our research, we have identified the corresponding outcomes that offer fresh insights and perspectives.

\section{Rough closed set and rough open sets with roughness $\mathscr{F}$}\label{Sec1}

In this section, we present two new concepts, identified as rough closed sets and rough open sets by employing a rough family \(\mathscr{F}\), which are formulated based on the notions of rough closure and rough interior of a set. Remember that for a set $A$ in a topological space $(X,\tau)$, the closure of \( A \), is \( \overline{A}=A\cup \{x \in X : \text{for any}~U \in \tau,~\text{where}~x \in U, U \cap A \setminus \{x\} \neq \emptyset\}\). Similarly, the interior of \( A \) is \( A^o =\{x \in A : \text{there exists}~U \in \tau~\text{such that}~x \in U \subset A\}\). Analogously, the definitions of rough closure and rough interior with roughness $\mathscr{F}$ are as follows:

\begin{definition}
 Let $\mathscr{F}$ be a rough family in a topological space $(X,\tau)$. An element $x\in X$ is defined as a rough limit point of a subset $A\subset X$ with roughness $\mathscr{F}$ if, for any open set $U$ in $(X,\tau)$ with $F_x\subset U$, $U\cap A\setminus\{x\}\neq \emptyset$. The rough closure with roughness $\mathscr{F}$ of $A$ is defined by 
$\overline{A}^\mathscr{F}=A\cup A^{'\mathscr{F}}$, where $A^{'\mathscr{F}}$, termed the rough derived set of $A$ with roughness $\mathscr{F}$, is defined by $$A^{'\mathscr{F}}=\{x\in X: x~ \text{is a rough limit point of }A~\text{with roughness}~\mathscr{F}\}.$$
  
    Similarly, an element $x\in A$ is called a rough interior point of $A$ with roughness $\mathscr{F}$ if there exists an open set $U$ in $(X,\tau)$ with $F_x\subset U$ and $U\subset A.$ Consequently, the rough interior of $A$ with roughness $\mathscr{F}$ is defined by 
    $$A^{o\mathscr{F}}=\{x\in A: x~\text{is a rough interior point of}~A~\text{with roughness}~\mathscr{F}\}.$$
\end{definition}

\begin{remark}
    For any sequence $x=\{x_n\}_{n\in\mathbb{N}}$ in $A\subset X$, it is immediate that $$Lim_x(\mathcal{I}_{fin}, \mathscr{F})\subseteq\Gamma_x(\mathcal{I}_{fin},\mathscr{F})\subseteq \overline{A}^\mathscr{F},$$ where the sets $Lim_x(\mathcal{I}_{fin}, \mathscr{F})$ and $\Gamma_x(\mathcal{I}_{fin},\mathscr{F})$ are the set of all rough ideal limits and rough ideal cluster points of $x=\{x_n\}_{n\in\mathbb{N}}$ with roughness $\mathscr{F}$ and ideal $\mathcal{I}:=\mathcal{I}_{fin}$, as defined in \cite{Leonetti2026}.
\end{remark}

The relationships among the closure, interior, rough closure, and rough interior of a set are described in the following theorem.

\begin{theorem}\label{T1} Let $A\subset X$ and $\mathscr{F}$ be a rough family in $X$. Then, $$A^{o\mathscr{F}}\subset A^{o}\subset A\subset \overline{A}\subset \overline{A}^\mathscr{F}.$$ Moreover, if $\mathscr{R}$ be another rough family in $X$ such that $\mathscr{F}\leq \mathscr{R}$. Then, $$A^{o\mathscr{R}}\subset A^{o\mathscr{F}} \subset A\subset \overline{A}^\mathscr{F}\subset \overline{A}^\mathscr{R}.$$ 

\end{theorem}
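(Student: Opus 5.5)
The whole argument is a direct unwinding of the definitions, using only two facts: every member of a rough family contains its base point ($x\in F_x$), and if $\mathscr{F}\leq\mathscr{R}$ then any open set containing $R_x$ also contains $F_x$. I will prove the inclusions one at a time, working from the interior side to the closure side, and for each one show that an element of the smaller set meets the membership condition of the larger set.

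\emph{First chain.} For $A^{o\mathscr{F}}\subset A^{o}$: take $x\in A^{o\mathscr{F}}$. There is an open $U$ with $F_x\subset U\subset A$. Since $x\in F_x$, we get $x\in U\subset A$, so $x\in A^{o}$. The inclusions $A^{o}\subset A\subset\overline{A}$ are standard. For $\overline{A}\subset\overline{A}^{\mathscr{F}}$ it suffices to show $A'\subset A^{'\mathscr{F}}$, since both closures contain $A$. Let $x$ be an ordinary limit point of $A$ and let $U$ be open with $F_x\subset U$. Then $x\in U$, so $U$ is an open neighbourhood of $x$, and therefore $U\cap A\setminus\{x\}\neq\emptyset$. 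Thus $x$ is a rough limit point of $A$ with roughness $\mathscr{F}$.

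\emph{Second chain.} Assume $\mathscr{F}\leq\mathscr{R}$. For $A^{o\mathscr{R}}\subset A^{o\mathscr{F}}$: if $x\in A$ and there is an open $U$ with $R_x\subset U\subset A$, then $F_x\subset R_x\subset U\subset A$, so $x\in A^{o\mathscr{F}}$. The middle inclusions $A^{o\mathscr{F}}\subset A\subset\overline{A}^{\mathscr{F}}$ hold by definition. For $\overline{A}^{\mathscr{F}}\subset\overline{A}^{\mathscr{R}}$ it again suffices to show $A^{'\mathscr{F}}\subset A^{'\mathscr{R}}$. Let $x\in A^{'\mathscr{F}}$ and let $U$ be open with $R_x\subset U$. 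Then $F_x\subset U$, so $U\cap A\setminus\{x\}\neq\emptyset$, and hence $x\in A^{'\mathscr{R}}$.

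I do not expect any real obstacle. The one point that needs care is the direction of the reversal. A larger roughness set makes the condition ``there exists an open $U\supset R_x$ inside $A$'' harder to satisfy, so the interior shrinks. It makes the condition ``every open $U\supset R_x$ meets $A\setminus\{x\}$'' easier to satisfy, since fewer open sets must be checked, so the derived set grows. Note also that the first chain is the special case $\mathscr{F}^\#\leq\mathscr{F}$ of the second, because roughness $\mathscr{F}^\#$ gives back the ordinary interior and closure. One could therefore prove the second chain first and derive the first from it.
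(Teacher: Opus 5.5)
Your proof is correct and follows the same route as the paper: you unwind the definitions, using $x\in F_x$ for the first chain and $F_x\subset R_x$ for the second. Your reduction of the closure inclusions to derived sets ($A'\subset A^{'\mathscr{F}}\subset A^{'\mathscr{R}}$) is slightly more careful than the paper's argument. The paper asserts $U\cap A\setminus\{x\}\neq\emptyset$ for every $x\in\overline{A}$, which overlooks isolated points of $A$; the slip is harmless, since such points lie in $A\subset\overline{A}^{\mathscr{F}}$ anyway.
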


\begin{proof}
     Let $x\in A^{o\mathscr{F}}.$ Then, there exists an open set $U$ in $X,$ such that $F_x\subset U$ and $U\subset A.$ Hence, $x\in U\subset A$ and this implies that $A^{o\mathscr{F}}\subset A^{o}\subset A.$ 

     For the other part, let us consider $x\in \overline{A}.$ Let $U$ be any open set in $X$ with $F_x\subset U.$ Consequently, $x\in U$ and thus $U\cap A\setminus\{x\}\neq \emptyset.$ Hence, $A\subset \overline{A}\subset \overline{A}^\mathscr{F}.$  

     The rest of the theorem can be proved similarly to the proof above.
\end{proof} 

In general, for $A\subset X,$ the sets $\overline{A}^\mathscr{F}$ and $A^{o\mathscr{F}}$ may not necessarily be closed or open. This is demonstrated by the following example:

\begin{example}\label{Ex2}
    Let us consider the sets $A=[1,5]$ and $B=(1,5)$ in $\mathbb{R}$ equipped with the usual topology i.e, in $\mathbb{R}_u$. Construct the rough family $$\mathscr{F}=\{(x-0.1,x+0.1): x\in \mathbb{R}\}.$$ Consequently, we obtain 
    $$\overline{A}^\mathscr{F}=(0.9,5.1)~~~\text{and}~~~B^{o\mathscr{F}}=[1.1,4.9].$$
    Clearly, the set $\overline{A}^\mathscr{F}$ is not a closed set and $B^{o\mathscr{F}}$ is not an open set.
\end{example}

Now we will examine the conditions under which the set $\overline{A}^\mathscr{F}$ is closed and the set $A^{o\mathscr{F}}$ is open. In this context, we recall the upper Vietoris topology $\hat{\tau}$ on $X$. For a topological space $(X,\tau)$, the upper Vietoris topology $\hat{\tau}$ is generated by the basis of sets $\{F\in \mathcal{H}(X): F\subseteq U, ~U\in \tau\}$, where $\mathcal{H}(X)=\{F\subseteq X: F~\text{is nonempty and closed}\}$.

\begin{theorem}\label{Th2.2} Let $A$ be any subset of a topological space $(X,\tau)$ with a  rough family $\mathscr{F}$. Then, the following statements hold: 
   \begin{itemize}
       \item[(i)] The set $\overline{A}^\mathscr{F}$ is closed if $F_x$ is closed for each $F_x\in \mathscr{F}$ and the map $x\to F_x$ is $\hat{\tau}$-continuous.

       \item[(ii)]The set $A^{o\mathscr{F}}$ is open if $F_x$ is open for each $F_x\in \mathscr{F}$ and $y\in F_x$ implies that $F_y\subset F_x$.
   \end{itemize} 
\end{theorem}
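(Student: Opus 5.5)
For (i), I will show that the complement $X\setminus\overline{A}^{\mathscr{F}}$ is open. Take $x\notin\overline{A}^{\mathscr{F}}$. Then $x\notin A$, and $x$ is not a rough limit point of $A$. So there is an open set $U$ with $F_x\subset U$ and $U\cap A\setminus\{x\}=\emptyset$. Since $x\notin A$, this gives $U\cap A=\emptyset$.

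Because $F_x$ is closed and nonempty (it contains $x$), $F_x$ belongs to $\mathcal{H}(X)$. The set $\{F\in\mathcal{H}(X):F\subseteq U\}$ is then a basic $\hat\tau$-open neighbourhood of $F_x$. By $\hat\tau$-continuity of $y\mapsto F_y$, there is an open $V\ni x$ such that $F_y\subseteq U$ for every $y\in V$.

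Now fix $y\in V$. Then $y\in F_y\subseteq U$, so $y\notin A$. Also $U$ is an open set containing $F_y$ with $U\cap A\setminus\{y\}=\emptyset$, so $y$ is not a rough limit point of $A$. Hence $V\subseteq X\setminus\overline{A}^{\mathscr{F}}$, and the complement is open.

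For (ii), take $x\in A^{o\mathscr{F}}$, with an open $U$ such that $F_x\subset U\subset A$. I claim $F_x\subseteq A^{o\mathscr{F}}$. Let $y\in F_x$. Then $y\in A$, and by hypothesis $F_y\subset F_x\subset U\subset A$, so the same $U$ witnesses that $y$ is a rough interior point of $A$.

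Since $F_x$ is open and $x\in F_x$, the set $F_x$ is an open neighbourhood of $x$ lying inside $A^{o\mathscr{F}}$. So $A^{o\mathscr{F}}$ is open.

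The only delicate step is in (i). One must check that the basic $\hat\tau$-open set really contains $F_x$, which needs $F_x$ closed and nonempty. One also needs to know that every point of $V$ lies outside $A$; this comes from $y\in F_y\subseteq U$ together with $U\cap A=\emptyset$.
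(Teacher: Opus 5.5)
Your proof is correct and is essentially the paper's argument. Part (ii) is identical: you show $F_x\subseteq A^{o\mathscr{F}}$ via $F_y\subset F_x\subset U\subset A$, then use that $F_x$ is an open set containing $x$. Part (i) rests on the same key step as the paper, namely applying $\hat\tau$-continuity at $x$ to the basic set $\{F\in\mathcal{H}(X): F\subseteq U\}$ so that $F_y\subseteq U$ for $y$ near $x$; you phrase it as openness of the complement, while the paper phrases it as closedness under limits of nets. Your version also makes explicit the case check the paper leaves inside ``it directly implies'': you use $y\in F_y\subseteq U$ and $U\cap A=\emptyset$ to rule out both $y\in A$ and $y$ being a rough limit point.
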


\begin{proof} 
\begin{itemize}
    \item[(i)] If $\overline{A}^\mathscr{F}=\emptyset,$ the proof is over. Therefore, suppose that $\overline{A}^\mathscr{F}\neq\emptyset$. Now, let us consider a net $\{x_i\}_{i\in I}$ in $\overline{A}^\mathscr{F}$ such that $x_i\to x\in X.$ Let $U$ be an open set in $X$ with $F_x\subset U.$ Now, due to the $\hat{\tau}$-continuity of $x\to F_x,$ $F_{x_i}\to F_x.$ Thus there exists a $j\in I$ so that $F_{x_j}\in \hat{U}$ or equivalently, $F_{x_j}\subset U$ where $\hat{U}=\{F\in \mathcal{H}(X): F\subset U\}.$  Now since $x_j\in \overline{A}^\mathscr{F},$ it directly implies that $x\in \overline{A}^\mathscr{F}.$ 
    
    \item[(ii)] Let, $x\in A^{o\mathscr{F}}.$ Then there is $U\in \tau$ so that $x\in F_x\subset U\subset A$. Now, for any $y\in F_x$, $F_y\subset F_x\subset U\subset A$ which implies $y\in A^{o\mathscr{F}}$ or $F_x\subset A^{o\mathscr{F}}$. Consequently, $x\in F_x\subset A^{o\mathscr{F}}$ and $F_x$ is open. Hence, $x\in (A^{o\mathscr{F}})^o$ and thus $A^{o\mathscr{F}}$ becomes open.
\end{itemize}
\end{proof}

Note that the condition (ii) in Theorem \ref{Th2.2} is sufficient but not necessary. This is clearly illustrated by Example \ref{Ex3}. We shall now examine the set-theoretic properties of rough closure and rough interior.

\begin{theorem}
    Let $\mathscr{F}$ be a rough family in a topological space $(X,\tau)$. Then, for $A, ~B\subset X,$ the following statements are true:
    \begin{itemize}
        \item[(i)] If $A\subset B,$ then $\overline{A}^\mathscr{F}\subseteq \overline{B}^\mathscr{F},$ ~$A^{o\mathscr{F}}\subseteq B^{o\mathscr{F}}.$
        
        \item[(ii)] $\overline{A}^\mathscr{F}\cup\overline{B}^\mathscr{F}= \overline{(A\cup B)}^\mathscr{F}$ and $A^{o\mathscr{F}}\cup B^{o\mathscr{F}}\subset (A\cup B)^{o\mathscr{F}}.$ In the second case, equality might not happen.
        
        \item[(iii)] $\overline{(A\cap B)}^\mathscr{F}\subset \overline{A}^\mathscr{F}\cap\overline{B}^\mathscr{F}$ and $(A\cap B)^{o\mathscr{F}}=A^{o\mathscr{F}}\cap B^{o\mathscr{F}}.$ Usually, the equality might not be true for the first case.
        
        \item[(iv)] $\left(\overline{A}^\mathscr{F}\right)^c=(A^c)^{o\mathscr{F}},$ $\left(A^{o\mathscr{F}}\right)^c=\overline{(A^c)}^\mathscr{F}.$
    \end{itemize}
\end{theorem}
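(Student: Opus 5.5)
Everything follows from unwinding the definitions pointwise; the plan is to work entirely with the characterizations ``$x\in\overline{A}^\mathscr{F}$ iff $x\in A$ or every open $U\supset F_x$ meets $A\setminus\{x\}$'' and ``$x\in A^{o\mathscr{F}}$ iff $x\in A$ and some open $U$ satisfies $F_x\subset U\subset A$''. I would prove (iv) first, because it lets the interior statements in (i)--(iii) be obtained from the closure statements by complementation (or the reverse, whichever is more convenient).

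For (iv), I would take $x\notin\overline{A}^\mathscr{F}$. Then $x\notin A$, and there is an open $U\supset F_x$ with $U\cap A\setminus\{x\}=\emptyset$. Since $x\notin A$, this gives $U\cap A=\emptyset$, i.e.\ $F_x\subset U\subset A^c$ with $x\in A^c$. Hence $x\in(A^c)^{o\mathscr{F}}$. Conversely, if $x\in A^c$ and $F_x\subset U\subset A^c$ with $U$ open, then $U$ misses $A$, so $x$ is neither in $A$ nor a rough limit point of $A$. This proves the first identity. Applying it to $A^c$ and taking complements gives the second.

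For (i), monotonicity of the closure is immediate: a witness $U$ meeting $A\setminus\{x\}$ also meets $B\setminus\{x\}$. For the interior, the same $U$ with $U\subset A\subset B$ works. For (ii), the inclusion $\supseteq$ of $\overline{A}^\mathscr{F}\cup\overline{B}^\mathscr{F}\subseteq\overline{(A\cup B)}^\mathscr{F}$ follows from (i). For the reverse inclusion, suppose $x\notin\overline{A}^\mathscr{F}\cup\overline{B}^\mathscr{F}$. Then there are open $U_1,U_2\supset F_x$ with $U_1\cap A\setminus\{x\}=\emptyset$ and $U_2\cap B\setminus\{x\}=\emptyset$, and $x\notin A\cup B$. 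The set $U_1\cap U_2$ is open, contains $F_x$, and misses $(A\cup B)\setminus\{x\}$, so $x\notin\overline{(A\cup B)}^\mathscr{F}$. The interior inclusion in (ii) follows from (i). The key point in this step, and the only non-formal one, is that finite intersections of open supersets of $F_x$ are again open supersets of $F_x$. This is also what gives the equality $(A\cap B)^{o\mathscr{F}}=A^{o\mathscr{F}}\cap B^{o\mathscr{F}}$ in (iii), either directly or as the complement of the union identity via (iv). The closure inclusion in (iii) comes from (i).

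The main obstacle is justifying the claims that equality may fail, which requires counterexamples. For (iii), I would work in $\mathbb{R}_u$ with $\mathscr{F}$ as in Example~\ref{Ex2}, and take $A=[0,1]$, $B=[1.15,2]$. Then $A\cap B=\emptyset$, while $1.05$ lies in both rough closures, since every open set containing $(0.95,1.15)$ contains points of $A$ and of $B$ (its closure contains the point $1.15$). For (ii), taking complements via (iv) turns this into the failure $A^{o\mathscr{F}}\cup B^{o\mathscr{F}}\neq(A\cup B)^{o\mathscr{F}}$ for the sets $A^c$ and $B^c$.
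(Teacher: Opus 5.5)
Your arguments for all the inclusions and identities in (i)--(iv) are correct and follow the paper's route of unwinding the definitions pointwise. You are more explicit than the paper about the one non-formal step: an intersection of two open supersets of $F_x$ is again one. Getting the second identity of (iv) by substituting $A^c$ into the first is also cleaner than the paper's direct argument.

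The genuine error is in the counterexample for (iii): $1.05$ is \emph{not} in $\overline{B}^\mathscr{F}$ for $B=[1.15,2]$. The set $U=F_{1.05}=(0.95,1.15)$ is itself open in $\mathbb{R}_u$, contains $F_{1.05}$, and is disjoint from $B$. That $1.15$ lies in the closure of $U$ does not put any point of $B$ inside $U$, so your parenthetical justification confuses $\overline{U}$ with $U$.

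Since every $F_x$ in this family is open, $x$ is a rough limit point of a set $C$ exactly when $F_x$ meets $C\setminus\{x\}$. This gives $\overline{A}^\mathscr{F}=(-0.1,1.1)$ and $\overline{B}^\mathscr{F}=(1.05,2.1)$; compare $\overline{[1,5]}^\mathscr{F}=(0.9,5.1)$ in Example~\ref{Ex2}. The point you chose is precisely the excluded endpoint.

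The repair is easy, because $\overline{A}^\mathscr{F}\cap\overline{B}^\mathscr{F}=(1.05,1.1)\neq\emptyset$. Take, e.g., $x=1.08$: $F_{1.08}=(0.98,1.18)$ meets $A\setminus\{x\}$ in $(0.98,1]$ and $B\setminus\{x\}$ in $[1.15,1.18)$. Meanwhile $\overline{A\cap B}^\mathscr{F}=\overline{\emptyset}^\mathscr{F}=\emptyset$.

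With this correction your complementation step for (ii) is valid. By (iv), $(A^c)^{o\mathscr{F}}\cup(B^c)^{o\mathscr{F}}=\mathbb{R}\setminus(1.05,1.1)$, whereas $(A^c\cup B^c)^{o\mathscr{F}}=\mathbb{R}^{o\mathscr{F}}=\mathbb{R}$. This duality trick produces both counterexamples from one computation. The paper instead builds two separate examples in $\mathbb{R}_l$, using the two-point families $\{x,2x\}$ and $\{x,x/2\}$ on $A=[0,1)$, $B=[1,2)$.
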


\begin{proof}
 \begin{itemize}
     \item[(i)]The proof follows directly from the definitions.
     
     \item[(ii)] The first part of (ii) follows from the definitions of rough closure jointly with (i). 
     
     For the subsequent part, let us consider $\mathbb{R}$ endowed with the lower limit topology $\mathbb{R}_l.$ Let $A=[0,1)$ and $B=[1,2).$ Then for the rough family $\mathscr{F}=\left\{\left\{x,2x\right\}: x\in \mathbb{R}\right\},$ we obtain $$A^{o\mathscr{F}}\cup B^{o\mathscr{F}}=\left[0,\frac{1}{2}\right)\cup\emptyset=\left[0,\frac{1}{2}\right)\neq \left[0,1\right)=(A\cup B)^{o\mathscr{F}}.$$
     
     \item[(iii)]  The first part of (iii) is also a consequence of (i) and the definition of rough interior.
     
     For the later part, we substitute the rough family $\mathscr{F}$ with $\mathscr{F}=\left\{\left\{x,\frac{x}{2}\right\}: x\in \mathbb{R}\right\}$ in the aforementioned scenario, yielding $$\overline{(A\cap B)}^\mathscr{F}=\overline{\emptyset}^\mathscr{F}=\emptyset\neq [1,2)=[0,2)\cap [1,4)=\overline{A}^\mathscr{F}\cap\overline{B}^\mathscr{F}.$$
     
     \item[(iv)]  For the first part, note that $x\in (A^c)^{o\mathscr{F}}$ if and only if there is $U\in \tau$ with $F_x\subset U\subset A^c,$ i.e., $(U\cap A)\setminus\{x\}=\emptyset$ which implies that $x\notin \overline{A}^\mathscr{F}$ or $x\in \left(\overline{A}^\mathscr{F}\right)^c$.

     For the second part, let $x\in \left(A^{o\mathscr{F}}\right)^c.$ Then, there exists $U\in \tau$ with $F_x\subset U$ but $\left(U\cap A^c\right)\setminus\{x\}\neq \emptyset$. Thus, $x\in \overline{(A^c)}^\mathscr{F}$ implies $\left(A^{o\mathscr{F}}\right)^c\subset \overline{(A^c)}^\mathscr{F}$. Similarly, we can obtain $\overline{(A^c)}^\mathscr{F}\subset \left(A^{o\mathscr{F}}\right)^c$ and we are done.
 \end{itemize}
\end{proof}

In a $T_1$ space, every singleton set $A$ is closed, and consequently, $\overline{A}=A$. However, in this context, if $\mathscr{F} \neq \mathscr{F^\#}$, then $\overline{A}^{\mathscr{F}}$ invariably contains more than one point, as demonstrated in the following theorem:

\begin{theorem}
    Let $f: X\to X$ be a bijective map such that $f(x)\neq x$ for all $x\in X$. Then, for the rough family, $\mathscr{F}=\{F_x=\{x,f(x)\}: x\in X\}$, $\overline{A}^{\mathscr{F}}$ contains more that one point for any $A\subset X$.
\end{theorem}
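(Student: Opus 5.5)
The plan is to exhibit two distinct points of $\overline{A}^{\mathscr{F}}$ directly from the definition of a rough limit point. We must read the statement for nonempty $A$. For $A=\emptyset$ no point can satisfy $U\cap A\setminus\{x\}\neq\emptyset$, so $\overline{\emptyset}^{\mathscr{F}}=\emptyset$; this is consistent with the computation $\overline{\emptyset}^{\mathscr{F}}=\emptyset$ in the proof of part (iii) of the preceding theorem. So we fix a nonempty $A\subset X$ and pick some $a\in A$.

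The first point is $a$ itself. By Theorem \ref{T1}, $A\subset\overline{A}^{\mathscr{F}}$, so $a\in\overline{A}^{\mathscr{F}}$.

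For the second point we use surjectivity of $f$: let $x=f^{-1}(a)$, the unique preimage (only surjectivity is really needed). Since $f(x)\neq x$, we have $x\neq a$, and $F_x=\{x,a\}$. We check that $x$ is a rough limit point of $A$ with roughness $\mathscr{F}$. Let $U\in\tau$ with $F_x\subset U$. Then $a\in U$, $a\in A$ and $a\neq x$, so $a\in U\cap A\setminus\{x\}$, which is therefore nonempty. Hence $x\in A^{'\mathscr{F}}\subset\overline{A}^{\mathscr{F}}$.

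Thus $\overline{A}^{\mathscr{F}}$ contains the two distinct points $a$ and $f^{-1}(a)$, which proves the claim. There is no real obstacle here. The only delicate points are noticing that the empty set must be excluded, and choosing the preimage of $a$ rather than its image $f(a)$. The image would not work in general, since $F_{f(a)}=\{f(a),f(f(a))\}$ need not meet $A$.
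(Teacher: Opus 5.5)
Your proof is correct and follows the paper's argument: pick $a\in A$ and take $b=f^{-1}(a)$, so that $F_b=\{b,a\}$ with $b\neq a$, and then every open $U\supset F_b$ meets $A\setminus\{b\}$ at $a$. The differences are cosmetic (the paper sets aside $|A|\ge 2$ and argues only for singletons, while you treat all nonempty $A$ at once), and your exclusion of $A=\emptyset$ is a genuine correction: the paper's case split silently skips the empty set, and $\overline{\emptyset}^{\mathscr{F}}=\emptyset$ by Remark~\ref{Rem2}(i).
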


\begin{proof}
    If $A\subset X$ contains more than one element, then there is nothing to prove. So, consider $A=\{a\}$ for some $a\in X$. Since, $f: X\to X$ is bijective and $f(x)\neq x$ for all $x\in X$, there exists $b\in X$ such that $b\neq a$ and $f^{-1}(a)=b$. Now, $F_b=\{b,f(b)\}=\{b,a\}$. Hence, for any open set $U$ in $X$ with $F_b\subset U$, $a\in (U\cap A)\setminus \{b\}$ implies $(U\cap A)\setminus \{b\}\neq \emptyset$. Consequently, $\{a,b\}\subset \overline{A}^{\mathscr{F}}$.
\end{proof}

\begin{corollary}
    Moreover, if $X$ is an ordered topological space in the above theorem, then $[a,b]\subset  \overline{A}^\mathscr{F}$ or $[b,a]\subset  \overline{A}^\mathscr{F}$ where $b=f^{-1}(a)$.
\end{corollary}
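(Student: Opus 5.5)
The plan is to reduce to the singleton case, as in the proof of the preceding theorem, and then try to push the conclusion from the two endpoints to the whole order interval between them. If $A$ has more than one point we pick some $a\in A$. Either way we set $b=f^{-1}(a)$, so $b\neq a$ and $F_b=\{b,a\}$. By the preceding theorem, $\{a,b\}\subset\overline{A}^{\mathscr{F}}$. By symmetry, suppose $a<b$ (the case $b<a$ gives $[b,a]$). It then remains to show that every $c$ with $a<c<b$ is a rough limit point of $A$ with roughness $\mathscr{F}$. That means: every open $U$ with $F_c=\{c,f(c)\}\subset U$ satisfies $(U\cap A)\setminus\{c\}\neq\emptyset$, and since $c\neq a$ this amounts to $a\in U$.

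For such $c$ I would use the order structure in the following way. Open sets of the order topology are unions of open intervals and rays. So I would try to show that an open set containing $F_c$ must contain an interval stretching down to $a$, using that $c$ lies between $a$ and $b=f^{-1}(a)$. The natural route is to exploit $F_b=\{a,b\}$: any open $U\supset F_b$ contains $a$ trivially. One would then like to transfer this from $b$ to intermediate points $c$, for instance via the $\hat{\tau}$-continuity of $x\mapsto F_x$ from Theorem \ref{Th2.2}(i), which would make $\overline{A}^{\mathscr{F}}$ closed. Closedness of the rough closure alone does not fill the interval, though, so this would need to be combined with an order-convexity argument.

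The main obstacle, which I expect to be decisive, is exactly this convexity step. As stated, nothing ties $F_c$ to $a$ for $a<c<b$. In $\mathbb{R}_u$, with $f$ a bijection moving every point, an open set containing $\{c,f(c)\}$ can be two small intervals avoiding $a$ whenever $f(c)\neq a$, and $f(c)\neq a$ always holds by injectivity since $f(b)=a$ and $c\neq b$. So I would first try to supply the missing ingredient: either restrict the admissible open sets $U$ to convex ones (open intervals), in which case $\{c,f(c)\}\subset U$ with $f(c)$ on the far side of $a$ forces $a\in U$; or impose a monotonicity or continuity condition on $f$ guaranteeing $f(c)\le a$ for $c\in(a,b)$. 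With such a hypothesis the argument closes immediately. Without it, I would look for a counterexample on $\mathbb{R}$, for instance $f(x)=x+1$, where $b=a-1$ and the points strictly between do not appear to lie in $\overline{\{a\}}^{\mathscr{F}}$. I would therefore expect the corollary to require an additional assumption to hold in the stated generality.
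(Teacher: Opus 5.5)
Your diagnosis is correct, and you can state the counterexample decisively rather than saying the points ``do not appear to lie'' in the rough closure. If $c\notin\{a,b\}$, then $a\notin F_c=\{c,f(c)\}$, because $f(c)=a$ would force $c=f^{-1}(a)=b$. In a $T_1$ space, and in particular in any order topology, the set $U=X\setminus\{a\}$ is open, contains $F_c$, and satisfies $(U\cap\{a\})\setminus\{c\}=\emptyset$. So $c$ is not a rough limit point of $\{a\}$. Hence $\overline{\{a\}}^{\mathscr{F}}=\{a,f^{-1}(a)\}$ exactly, for every fixed-point-free bijection $f$. With $X=\mathbb{R}$, $f(x)=x+1$ and $A=\{0\}$, this gives $\overline{A}^{\mathscr{F}}=\{-1,0\}$, so $[-1,0]\not\subset\overline{A}^{\mathscr{F}}$.

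The corollary is therefore false in its intended reading, namely that the closed interval between $a$ and $b$ lies in the rough closure. Read literally it is vacuous, since one of $[a,b]$, $[b,a]$ is empty when $a\neq b$. The paper's own proof fails at exactly the convexity step you isolated. It works with open sets containing $F_b=\{b,a\}$ instead of $F_c$. It then asserts that every such set has the form $(x,y)$ with $x<b<a<y$, and concludes $c\in\overline{A}$. That conclusion is impossible for $A=\{a\}$ and $c\neq a$ in a $T_1$ space.

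One correction to your suggested repairs: the second option, a monotonicity or continuity hypothesis on $f$ alone, cannot work. The computation above gives $\overline{\{a\}}^{\mathscr{F}}=\{a,f^{-1}(a)\}$ whatever $f$ is, and your own two-small-intervals construction already defeats such a hypothesis. Apart from the trivial case where no point lies strictly between $a$ and $b$, the inclusion can only be recovered by changing the definition so that only convex open sets $U\supset F_c$ are tested. Even then you additionally need $f(c)$ to lie on the opposite side of $a$ from $c$ for every $c$ strictly between $a$ and $b$. This holds for $f(x)=x+1$, but not for an arbitrary fixed-point-free bijection.
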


\begin{proof}
    Since, $f(x)\neq x$ for all $x\in X$, so either $a<b$ or $a>b$. Without loss of generality, let $a>b$. Moreover, let $c\in [b, a]$. Then for any open set $U$ in $X$ with $F_b=\{b,a\}$  is of the form $(x,y)$ where $x,y\in X$ satisfying $x<b<a<y$. Consequently, $c\in U$ and $(U\cap A)\setminus \{c\}\neq \emptyset$ imply $c\in \overline{A}\subset \overline{A}^\mathscr{F}$. 
\end{proof}

Our next aim is to determine the rough closure and rough interior of a set within a finite product of topological spaces.

\begin{theorem}\label{Th2.5}
     Let $\mathscr{F}=\{F_x: x\in X\}$ and $\mathscr{L}=\{L_y: y\in Y\}$ be two rough families in two topological spaces \((X,\tau)\) and \((Y,\zeta)\) respectively. Then for $A\subset X$, $B\subset Y$, the following are true 
     \begin{itemize}
         \item[(i)] $\overline{A\times B}^{\mathcal{G}}=\overline{A}^\mathscr{F}\times \overline{B}^{\mathscr{L}};$
         \item[(ii)] $(A\times B)^{o\mathcal{G}}=A^{o\mathscr{F}}\times B^{o\mathscr{L}}$. 
     \end{itemize}
     where the rough family $\mathcal{G}$ in $X\times Y$ is defined by $\mathcal{G}=\{G_{(x,y)}=F_x\times L_y: F_x\in \mathscr{F}~\text{and}~L_y\in \mathscr{L}\}$.
\end{theorem}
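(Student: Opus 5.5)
The plan is to prove (ii) in full generality first, then (i) by splitting the points of $\overline{A}^\mathscr{F}\times\overline{B}^{\mathscr{L}}$ according to whether each coordinate lies in the set itself or in its rough derived set. Throughout I use two facts: the projections $\pi_X,\pi_Y$ are open maps, and slices $\{v:(x,v)\in W\}$ of an open set $W\subset X\times Y$ are open.

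\textbf{Part (ii).} For $\supseteq$: if $F_x\subset U\subset A$ and $L_y\subset V\subset B$ with $U,V$ open, then $U\times V$ is open, contains $G_{(x,y)}=F_x\times L_y$, and lies in $A\times B$. For $\subseteq$: let $W$ be open with $F_x\times L_y\subset W\subset A\times B$. Then $\pi_X(W)$ is open, contains $F_x$ (because $L_y\neq\emptyset$), and is contained in $A$. Symmetrically $\pi_Y(W)$ is open with $L_y\subset\pi_Y(W)\subset B$. Hence $x\in A^{o\mathscr{F}}$ and $y\in B^{o\mathscr{L}}$. No extra hypotheses are needed here.

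\textbf{Part (i), inclusion $\subseteq$.} Let $(x,y)\in\overline{A\times B}^{\mathcal{G}}$ and assume $x\notin A$. For every open $U\supset F_x$, the set $U\times Y$ is open and contains $G_{(x,y)}$, so it meets $(A\times B)\setminus\{(x,y)\}$. This produces some $a\in U\cap A$, and $a\neq x$ because $x\notin A$. Hence $x\in\overline{A}^\mathscr{F}$, and the argument for $y$ is symmetric.

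\textbf{Part (i), inclusion $\supseteq$.} Take $x\in\overline{A}^\mathscr{F}$, $y\in\overline{B}^{\mathscr{L}}$, and an open $W\supset F_x\times L_y$. I split into cases.
\begin{itemize}
\item If $x\in A$ and $y\in B$, then $(x,y)\in A\times B$ and there is nothing to prove.
\item If $x\in A$ and $y\in B^{'\mathscr{L}}$, the slice $V=\{v:(x,v)\in W\}$ is open and contains $L_y$. So $V$ meets $B\setminus\{y\}$ at some $b$, and then $(x,b)\in W\cap (A\times B)$ with $(x,b)\neq(x,y)$.
\item The case $x\in A^{'\mathscr{F}}$, $y\in B$ is symmetric.
\end{itemize}

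\textbf{Main obstacle.} The remaining case is $x\in A^{'\mathscr{F}}$, $y\in B^{'\mathscr{L}}$ with $x\notin A$, $y\notin B$. The slice argument produces $a\in A\setminus\{x\}$ with $(a,y)\in W$. But the slice through $a$ need not contain $L_y$, since $W$ is only known to contain $\{a\}\times\{y\}$, not $\{a\}\times L_y$.

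The fix I would try first is to replace $W$ by an open rectangle $U\times V$ with $F_x\subset U$, $L_y\subset V$ and $U\times V\subset W$. Wallace's theorem (the generalized tube lemma) supplies such a rectangle whenever $F_x$ and $L_y$ are compact, for instance finite as in the earlier examples. Then $U$ meets $A\setminus\{x\}$ at some $a$ and $V$ meets $B\setminus\{y\}$ at some $b$, so $(a,b)\in W\cap(A\times B)$ with $(a,b)\neq(x,y)$.

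If the rectangle cannot be obtained in general, I expect that a counterexample can be built from non-compact $F_x,L_y$ with an open set hugging $F_x\times L_y$. In that case the theorem should be stated with compactness of the members of $\mathscr{F}$ and $\mathscr{L}$, or with the rough families restricted so that neighbourhoods of $G_{(x,y)}$ contain rectangles. I would make this hypothesis explicit in the proof.
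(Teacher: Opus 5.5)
Your arguments for (ii), for the inclusion $\overline{A\times B}^{\mathcal{G}}\subseteq\overline{A}^{\mathscr{F}}\times\overline{B}^{\mathscr{L}}$, and for the three easy cases of the reverse inclusion are correct. Your projection argument for (ii) is fully general. It is sounder than the paper's, which in both parts builds an open rectangle $U\times V$ with $F_x\times L_y\subseteq U\times V\subseteq W$ by asserting tubes $U_z\times L_y\subseteq W$; that step needs compactness, as you suspected. The genuine gap is the remaining case $x\in A^{\prime\mathscr{F}}\setminus A$, $y\in B^{\prime\mathscr{L}}\setminus B$. You settle it only for compact $F_x$ and $L_y$, and then conjecture that the statement may fail otherwise. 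It does not fail, and no extra hypothesis should be added. The mistake is looking for a rectangle around all of $F_x\times L_y$.

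The missing idea is to work at a single point of $F_x\times L_y$. Because $x\notin A$, the open set $X\setminus\overline{A}$ misses $A$. So it cannot contain $F_x$, since otherwise $x$ would not be a rough limit point. Hence some $z\in F_x\cap\overline{A}$ exists, and likewise some $w\in L_y\cap\overline{B}$. Now take an open $W\supseteq F_x\times L_y$. Then $(z,w)\in W$, so a basic box $U'\times V'\ni(z,w)$ lies in $W$. Since $z\in\overline{A}$ and $w\in\overline{B}$, $U'$ meets $A$ and $V'$ meets $B$, which gives a point of $W\cap(A\times B)$. That point differs from $(x,y)$ because $(x,y)\notin A\times B$. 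More globally, the same observation together with $x\in F_x$ shows that $\overline{A}^{\mathscr{F}}=\{x\in X: F_x\cap\overline{A}\neq\emptyset\}$ for every rough family. Then (i) follows at once from $\overline{A\times B}=\overline{A}\times\overline{B}$ and $(F_x\times L_y)\cap(\overline{A}\times\overline{B})=(F_x\cap\overline{A})\times(L_y\cap\overline{B})$, with no case split.
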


\begin{proof}\begin{itemize}
         \item[(i)]    Let $(x,y)\in \overline{A\times B}^{\mathcal{G}}$. Let $U\in\tau$ and $V\in \zeta$ such that $F_x\subset U$ and $L_y\subset V$. Then $U\times V$ is an open set in $X\times Y$ with $G_{(x,y)}=F_x\times L_y\subset U\times V$. Consequently, we obtain
    $$\left\{(U\times V)\cap (A\times B)\right\}\setminus \{(x,y)\}\neq \emptyset,$$
    which directly implies, $$(U\cap A)\setminus\{x\}\neq\emptyset~\text{and}~(V\cap B)\setminus\{y\}\neq \emptyset.$$
    Hence, we have $x\in \overline{A}^\mathscr{F}$ and $y\in \overline{B}^{\mathscr{L}}$ or equivalently, $(x,y)\in \overline{A}^\mathscr{F}\times \overline{B}^{\mathscr{L}}$.\\

    Conversely, let  $(x,y)\in \overline{A}^\mathscr{F}\times \overline{B}^{\mathscr{L}}$ and let $W$ be an open set in $X\times Y$ with $G_{(x,y)}=F_x\times L_y\subset W$. Then, for any $(z,t)\in F_x\times L_y$, there exist $U_z\in \tau$, $V_t\in \zeta$ such that $(z,t)\in U_z\times L_y\subset W$ and $(z,t)\in F_x\times V_t\subset W$. Let $U=\bigcup_{z\in F_x} U_z$ and $V=\bigcup_{t\in L_y} V_t$. Consequently,  $U\in \tau$ with $F_x\subset U$, $V\in \zeta$ with $L_y\subset V$, and  $U\times V\subset W$. Now $x\in \overline{A}^\mathscr{F}$, $y\in \overline{B}^{\mathscr{L}}$ imply $$(U\cap A)\setminus\{x\}\neq\emptyset~\text{and}~(V\cap B)\setminus\{y\}\neq \emptyset.$$
    Hence, $$\left\{W\cap (A\times B)\right\}\setminus \{(x,y)\}\neq \emptyset ~~~\text{implies}~~~(x,y)\in \overline{A\times B}^{\mathcal{G}}.$$ Subsequently, we obtain  $\overline{A\times B}^{\mathcal{G}}=\overline{A}^\mathscr{F}\times \overline{B}^{\mathscr{L}}$.\\
    \item[(ii)]   Let $(x,y)\in (A\times B)^{o\mathcal{G}}$. Then, there is an open set $W$ in $X\times Y$ with $$G_{(x,y)}=F_x\times L_y\subset W\subset A\times B.$$ Then as above, we get $U\in \tau$, $V\in \zeta$ such that $F_x\times L_y\subset U\times V\subset W\subset A\times B$. In other words, we can write $F_x\subset U\subset A$ and $L_y\subset V\subset B$. Therefore, $(x,y)\in A^{o\mathscr{F}}\times B^{o\mathscr{L}}$ and it follows that $(A\times B)^{o\mathcal{G}}\subset A^{o\mathscr{F}}\times B^{o\mathscr{L}}$.

    Conversely, let $(x,y)\in A^{o\mathscr{F}}\times B^{o\mathscr{L}}$. Then there are $U\in \tau$, $V\in \zeta$ with $F_x\subset U\subset A$ and $L_y\subset V\subset B$. Consequently, $G_{(x,y)}=F_x\times L_y\subset U\times V \subset A\times B$ implies $(x,y)\in (A\times B)^{o\mathcal{G}}$.
     \end{itemize}
\end{proof}

If the rough family $\mathcal{G}$ on $X\times Y$ is defined differently, we might get different results from Theorem \ref{Th2.5}. Here is an example:

\begin{example}
    Let us consider both the topological spaces $(X,\tau)$ and $(Y,\zeta)$ as $\mathbb{R}_u$. We construct the rough families $\mathscr{F}$ on $X$, $\mathscr{L}$ on $Y$, and $\mathcal{G}$ on $X\times Y$ as follows:
    \begin{multline*}
     \mathscr{F}=\mathscr{L}=\left\{(x-1,x+1):x\in \mathbb{R}\right\}~~~\text{and}~~~\\\mathcal{G}=\left\{\left(x-\frac{1}{2},x+\frac{1}{2}\right)\times\left(y-\frac{1}{2},y+\frac{1}{2}\right): (x,y)\in X\times Y\right\}.
    \end{multline*}
    Now, let $A=(0,5)$ and $B=(1,6)$. Then, $$\overline{A}^\mathscr{F}=(-1,6),~\overline{B}^\mathscr{L}=(0,7)~\text{and thus}~ \overline{A}^\mathscr{F}\times \overline{B}^\mathscr{L}=(-1,6)\times(0,7).$$
    Clearly, \begin{multline*}
        \left(-\frac{1}{2},\frac{1}{2}\right)\in \overline{A}^\mathscr{F}\times \overline{B}^\mathscr{L}~\text{but}~\left(-\frac{1}{2},\frac{1}{2}\right)\notin \overline{(A\times B)}^\mathcal{G}\\~\text{as}~\left[\left(-1,0\right)\times (0,1)\right]\bigcap (A\times B)\setminus\left\{\left(-\frac{1}{2},\frac{1}{2}\right)\right\}=\emptyset.
    \end{multline*}
    Similarly, 
    $$A^{o\mathscr{F}}=[1,4],~B^{o\mathscr{L}}=[2,5]~\text{but}~(A\times B)^{o\mathcal{G}}=
    \left[\frac{1}{2},\frac{9}{2}\right]\times \left[\frac{3}{2},\frac{11}{2}\right].$$
    Subsequently, $\overline{(A\times B)}^\mathcal{G}\neq \overline{A}^\mathscr{F}\times \overline{B}^\mathscr{L}$ and $(A\times B)^{o\mathcal{G}}\neq A^{o\mathscr{F}}\times B^{o\mathscr{L}}$.
\end{example}

\begin{remark}
    In Theorem \ref{Th2.5}, let $\mathcal{G}$ be a rough family in $X\times Y$ such that \begin{itemize}
        \item[(i)] $G_{(x,y)}\subset F_x\times L_y$ for all $(x,y)\in X\times Y$, then 
    $$\overline{A\times B}^{\mathcal{G}}\subset \overline{A}^\mathscr{F}\times \overline{B}^{\mathscr{L}} ~~~\text{and}~~~A^{o\mathscr{F}}\times B^{o\mathscr{L}}\subset(A\times B)^{o\mathcal{G}}.$$

     \item[(ii)] $F_x\times L_y\subset G_{(x,y)}$ for all $(x,y)\in X\times Y$, then
    $$\overline{A}^\mathscr{F}\times \overline{B}^{\mathscr{L}}\subset\overline{A\times B}^{\mathcal{G}}~~~\text{and}~~~(A\times B)^{o\mathcal{G}}\subset A^{o\mathscr{F}}\times B^{o\mathscr{L}}.$$

    \item[(iii)] $G_{(x,y)}\neq F_x\times L_y$ for $(x,y)\in X\times Y$, then 
    $$\left(\overline{A\times B}^{\mathcal{G}}\right)\triangle\left(\overline{A}^\mathscr{F}\times \overline{B}^{\mathscr{L}}\right)\neq\emptyset~~~\text{and}~~~\left(A^{o\mathscr{F}}\times B^{o\mathscr{L}}\right)\triangle(A\times B)^{o\mathcal{G}}\neq \emptyset.$$
    \end{itemize}
\end{remark}

We will now define what a rough separable space is, using the idea of rough closure. To do this, we first explain what a rough dense set is.

\begin{definition}
    A subset $A$ in a topological space $(X,\tau)$ is said to be rough dense in $X$ with roughness $\mathscr{F}$ or $\mathscr{F}$-dense provided that $\overline{A}^\mathscr{F}=X$. A topological space $X$ is said to be rough separable with roughness $\mathscr{F}$ or $\mathscr{F}$-separable if $X$ has a countable $\mathscr{F}$-dense subset.
\end{definition}

Since, for any subset $A \subset X$, $\overline{A}\subset\overline{A}^\mathscr{F}$ for any rough family $\mathscr{F}$, it follows that every separable topological space is also $\mathscr{F}$-separable. Nonetheless, the reverse implication does not necessarily hold, as demonstrated by the following example:

\begin{example}
    Let us consider $\mathbb{R}$ with the discrete topology $\tau_{disc}$. Subsequently, $\tau_{disc}$ is not separable. Now consider the rough families $\mathscr{F}$ and $\mathcal{G}$ as follows: $$\mathscr{F}=\left\{[x,x+1]: x\in \mathbb{R}\right\}~~~\text{and}~~~\mathcal{G}=\left\{[x,x+\alpha]: x\in \mathbb{R}~\text{and}~\alpha>0\right\}.$$
    Then, $\mathbb{R}$ is both $\mathscr{F}$-separable and $\mathcal{G}$-separable as $\overline{\mathbb{Z}}^\mathscr{F}=\mathbb{R}$ and $\overline{\mathbb{Q}}^\mathcal{G}=\mathbb{R}$.
\end{example}

In line with the usual concepts of closed and open sets, we introduce the ideas of rough closed sets and rough open sets. The definitions are outlined as follows:

\begin{definition}
Let $\mathscr{F}$ be a rough family in a topological space $(X,\tau)$. A set $A\subset X$ is said to be rough closed with roughness $\mathscr{F}$ or $\mathscr{F}$-closed provided that $\overline{A}^\mathscr{F}=A$ and is called  rough open with roughness $\mathscr{F}$ or $\mathscr{F}$-open provided that $A^{o\mathscr{F}}=A$. 
\end{definition}

\begin{example}\label{Ex3}
    Let us consider the space $\mathbb{R}_l$ and let $A=[0,1).$ Fix the rough families $\mathscr{F}$ and $\mathcal{G}$ as follows: $$\mathscr{F}=\left\{\left\{x,\frac{x}{2}\right\}: x\in \mathbb{R}\right\}~~~\text{and}~~~\mathcal{G}=\left\{\left\{x,2x\right\}: x\in \mathbb{R}\right\}.$$ Then,
    we obtain $$A^{o\mathscr{F}}=[0,1)=A~~~\text{and}~~~\overline{A}^\mathcal{G}=[0,1)=A.$$  
    Hence, the set $A$ is $\mathscr{F}$-open and is $\mathcal{G}$-closed.
\end{example}

\begin{remark}\label{Rem2}
    \begin{itemize}
        \item[(i)]  $\overline{\emptyset}^\mathscr{F}=\emptyset,$ $\emptyset^{o\mathscr{F}}=\emptyset,$  $\overline{X}^\mathscr{F}=X,$ $X^{o\mathscr{F}}=X$ for any rough family $\mathscr{F}$. Therefore, $\emptyset, X$ both are $\mathscr{F}$-closed and $\mathscr{F}$-open for any rough family $\mathscr{F}$.
        
        \item[(ii)] If $A, B$ are $\mathscr{F}$-open sets, so is $A\cap B$. Similarly, if $A, B$ are $\mathscr{F}$-closed sets, so is $A\cup B$.

        \item[(iii)] Building on Theorem \ref{T1}, we can conclude that for any roughness $\mathscr{F}$, the $\mathscr{F}$-open sets are indeed open, and the $\mathscr{F}$-closed sets are closed. However, Example \ref{Ex2} shows us that the reverse isn't always true.

        \item[(iv)] From Example \ref{Ex3}, we get $$\overline{A}^\mathscr{F}=[0,2)\subset [0,4)=\overline{\left(\overline{A}^\mathscr{F}\right)}^{\mathscr{F}}   ~~~\text{and}~~~{\left(A^{o\mathcal{G}}\right)}^{o\mathcal{G}}=\left[0,\frac{1}{4}\right)\subset\left[0,\frac{1}{2}\right)=A^{o\mathcal{G}}.$$    
        Thus, for any set $A\subset X,$ $\overline{A}^\mathscr{F}$ may fail to be  $\mathscr{F}$-closed. Similarly, $A^{o\mathscr{F}}$ may fail to be  $\mathscr{F}$-open. Hence, following Theorem \ref{T1}, it can be concluded that for any rough family $\mathscr{F},$ the operator $A\to \overline{A}^\mathscr{F}$ is an expanding operator and the operator  $A\to A^{o\mathscr{F}}$ is a shrinking operator.
    \end{itemize}
\end{remark}

The subsequent theorem elucidates the relationship between $\mathscr{F}$-closed sets and $\mathscr{F}$-open sets.

\begin{theorem}\label{T6}
    A set $A\subset X$ is $\mathscr{F}$-closed if and only if its complement $X\setminus A$ is $\mathscr{F}$-open.
\end{theorem}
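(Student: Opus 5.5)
The plan is to derive the statement directly from the complementation identity in part (iv) of the preceding theorem on set-theoretic properties of rough closure and rough interior,
$$\left(\overline{A}^\mathscr{F}\right)^c=(A^c)^{o\mathscr{F}},$$
combined with the definitions of $\mathscr{F}$-closed and $\mathscr{F}$-open sets. No topological construction is needed; the argument is purely set-theoretic once (iv) is available.

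For the forward direction, I will assume $A$ is $\mathscr{F}$-closed, so that $\overline{A}^\mathscr{F}=A$. Taking complements and applying (iv) gives
$$(X\setminus A)^{o\mathscr{F}}=\left(\overline{A}^\mathscr{F}\right)^c=A^c=X\setminus A.$$
Hence $X\setminus A$ is $\mathscr{F}$-open.

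For the converse, I will assume $(X\setminus A)^{o\mathscr{F}}=X\setminus A$. Using (iv) once more,
$$\left(\overline{A}^\mathscr{F}\right)^c=(A^c)^{o\mathscr{F}}=A^c.$$
Complementing both sides yields $\overline{A}^\mathscr{F}=A$, so $A$ is $\mathscr{F}$-closed.

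The only point needing care is the first identity in (iv) itself, so I will re-verify it from the definitions rather than rely on it blindly. A point $x$ lies outside $\overline{A}^\mathscr{F}$ exactly when $x\notin A$ and there is an open set $U$ with $F_x\subset U$ and $(U\cap A)\setminus\{x\}=\emptyset$. Since $x\notin A$, the last condition is equivalent to $U\cap A=\emptyset$, that is, $U\subset A^c$. This is precisely the statement that $x\in A^c$ is a rough interior point of $A^c$. The hypothesis $x\notin A$ is exactly what lets the removal of $\{x\}$ be dropped, so the two conditions match and the identity holds.
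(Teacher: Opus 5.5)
Your proof is correct and is essentially the paper's argument. The paper does not cite part (iv); it performs your pointwise check directly in each direction, using $x\notin A$ to turn $(U\cap A)\setminus\{x\}=\emptyset$ into $U\subset X\setminus A$, and for the converse it applies the same reasoning to an $\mathscr{F}$-open set and its complement.

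Routing the argument through the identity $\bigl(\overline{A}^\mathscr{F}\bigr)^c=(A^c)^{o\mathscr{F}}$ handles both directions at once. Your explicit re-verification of that identity is worthwhile, since the paper's written proof of (iv) only argues the inclusion $(A^c)^{o\mathscr{F}}\subset\bigl(\overline{A}^\mathscr{F}\bigr)^c$.
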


\begin{proof} If $A=\emptyset,$ then it is obvious. So, let $A\subset X$ be a nonempty $\mathscr{F}$-closed set.  Let $x\in X\setminus A.$ Thus $x\notin A=\overline{A}^\mathscr{F}.$ Consequently, there exists an open set $U$ with $F_x\subset U$ but $U\cap A\setminus\{x\}=\emptyset.$ Hence, $U\subset X\setminus A$ and thus $x\in (X\setminus A)^{o\mathscr{F}}$ implies that $X\setminus A$ is $\mathscr{F}$-open.\\

On the other hand, let $A$ be $\mathscr{F}$-open. Let $x\notin X\setminus A$. Then,  $x\in A$ and thus there is an open set $U$ in $X$ with $F_x\subset U\subset A$. Hence, $\left\{U\cap (X\setminus A)\right\}\setminus\{x\}=\emptyset$ which implies that $x\notin \overline{(X\setminus A)}^\mathscr{F}$. Consequently, $\overline{(X\setminus A)}^\mathscr{F}\subset X\setminus A$ and therefore, $X\setminus A$ is $\mathscr{F}$-closed.
\end{proof}

\begin{theorem}\label{Th3.7} Let $\mathscr{F}$ be a rough family in a topological space $(X,\tau)$ and $A\subset X$. Then the following are true: \begin{itemize}
    \item[(i)] If $A$ is $\mathscr{F}$-open set (resp. $\mathscr{F}$-closed) in $(X,\tau)$, then $A$ is also $\mathscr{F}$-open (resp. $\mathscr{F}$-closed) in $(X,\zeta)$ for every $\tau\subset \zeta$. The reverse implication is not necessarily true.

    \item[(ii)] $A$ is $\mathscr{R}$-open (resp. $\mathscr{R}$-closed) implies $A$ is $\mathscr{F}$-open (resp. $\mathscr{F}$-closed) as well for any rough family $\mathscr{R}$  in $X$ with $\mathscr{F}\leq \mathscr{R}$.
\end{itemize}
\end{theorem}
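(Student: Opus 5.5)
By Theorem \ref{T6}, the closed-set statements in (i) and (ii) follow from the open-set statements by taking complements. So I will prove only the $\mathscr{F}$-open versions, and then give a counterexample for the failure of the converse in (i).

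For (i), let $A$ be $\mathscr{F}$-open in $(X,\tau)$, so that $A^{o\mathscr{F}}=A$ with respect to $\tau$. Take any $x\in A$. There is $U\in\tau$ with $F_x\subset U\subset A$. Since $\tau\subset\zeta$, the same $U$ lies in $\zeta$, so $x$ is a rough interior point of $A$ with roughness $\mathscr{F}$ in $(X,\zeta)$. Hence $A\subset A^{o\mathscr{F}}$ computed in $\zeta$. The reverse inclusion holds by Theorem \ref{T1}, which gives equality.

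For (ii), suppose $\mathscr{F}\leq\mathscr{R}$ and $A$ is $\mathscr{R}$-open. Theorem \ref{T1} gives $A=A^{o\mathscr{R}}\subset A^{o\mathscr{F}}\subset A$, so $A^{o\mathscr{F}}=A$. Directly: for $x\in A$ there is an open $U$ with $F_x\subset R_x\subset U\subset A$.

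The only step needing care is the counterexample for (i). Take $X=\mathbb{R}$ with $\tau$ the indiscrete topology and $\zeta$ the discrete topology, and let $F_x=\{x\}$; the statement does not forbid $\mathscr{F}^\#$. Any rough family also works for this, since every set is $\mathscr{F}$-open in the discrete topology. Indeed, for $x\in A$ the set $U=A$ is $\zeta$-open, but $F_x\subset A$ is not guaranteed. So to cover an arbitrary rough family, I will instead take $A=\{0\}$ and $F_x=\{x\}$. Then $A$ is $\zeta$-open ($\mathscr{F}$-open in $\zeta$), but it is not open in $\tau$, hence not $\mathscr{F}$-open in $\tau$ by Remark \ref{Rem2}(iii). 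If one insists on $\mathscr{F}\neq\mathscr{F}^\#$, use $\mathbb{R}_u\subset\mathbb{R}_l$ with the family $\{x,2x\}$ from Example \ref{Ex3}. There $[0,1)$ is $\mathscr{G}$-closed in $\mathbb{R}_l$ but not closed in $\mathbb{R}_u$, hence not $\mathscr{G}$-closed in $\mathbb{R}_u$. This settles the closed case, and the open case follows by complementation via Theorem \ref{T6}.
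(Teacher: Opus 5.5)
Your proof is correct and is essentially the paper's argument: the same witness $U\in\tau\subset\zeta$ in (i), the monotonicity $A^{o\mathscr{R}}\subset A^{o\mathscr{F}}$ of Theorem~\ref{T1} in (ii), and an $\mathbb{R}_u\subset\mathbb{R}_l$ counterexample on $[0,1)$. The only differences are that you get the closed cases by complementation through Theorem~\ref{T6} rather than by a parallel argument, and that you refute the converse via Remark~\ref{Rem2}(iii), since your set is not even closed in $\mathbb{R}_u$; the paper instead computes $[0,1)^{o\mathscr{F}}=(0,1)$ in $\mathbb{R}_u$ for the family $\{x,\frac{x}{2}\}$.

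Delete the aside that every set is $\mathscr{F}$-open in the discrete topology for any rough family. It is false: if $F_x=X$ for all $x$, only $\emptyset$ and $X$ are $\mathscr{F}$-open in any topology, so the converse of (i) actually holds for that family. A single specific counterexample is all the statement requires.
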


\begin{proof}
\begin{itemize}
    \item[(i)] By assumption, $A^{o\mathscr{F}}=A$ in $(X,\tau)$. Let $x\in A^{o\mathscr{F}}$ in $(X,\tau)$. Then there exists $U\in \tau$ with $F_x\subset U\subset A$. Consequently, $U\in \zeta$ with $F_x\subset U\subset A$ implies that $x\in A^{o\mathscr{F}}$ in $(X,\zeta)$ and thus $A^{o\mathscr{F}}=A$ also in $(X,\zeta)$.

    The proof that $A$ is $\mathscr{F}$-closed in $(X, \zeta)$ whenever it is $\mathscr{F}$-closed in $(X, \tau)$ follows by a similar argument.

    For the subsequent part, let us consider the rough family $\mathscr{F}=\left\{\left\{x,\frac{x}{2}\right\}: x\in \mathbb{R}\right\}$ in both $\mathbb{R}_l$ and $\mathbb{R}_u$. Then the set $A=[0,1)$ is $\mathscr{F}$-open in $\mathbb{R}_l$. However, in $\mathbb{R}_u$, $A^{o\mathscr{F}}=(0,1)\neq A$ implies that $A=[0,1)$ is not $\mathscr{F}$-open in $\mathbb{R}_u$.

    \item[(ii)] This is an immediate consequence of Corollary \ref{T1}. 
\end{itemize}
\end{proof}

A pertinent question arises as to whether the product of two rough closed sets or rough open sets is rough closed or rough open. The following theorem provides a definitive answer to this inquiry.

\begin{theorem}
     Let $\mathscr{F}=\{F_x: x\in X\}$ and $\mathscr{L}=\{L_y: y\in Y\}$ be two rough families in two topological spaces \((X,\tau)\) and \((Y,\zeta)\) respectively. Then for the rough family $\mathcal{G}=\{G_{(x,y)}=F_x\times L_y: F_x\in\mathscr{F}~\text{and}~L_y\in \mathscr{L}\}$ in $X\times Y$, the following hold true:
    \begin{itemize}
        \item[(i)] If $A\subset X$ be $\mathscr{F}$-closed and $B\subset Y$ be $\mathscr{L}$-closed, then $A\times B$ is a $\mathcal{G}$-closed set in $X\times Y$.
        \item[(ii)] If $A\subset X$ be $\mathscr{F}$-open and $B\subset Y$ be $\mathscr{L}$-open, then $A\times B$ is a $\mathcal{G}$-open set in $X\times Y$. 
    \end{itemize}
\end{theorem}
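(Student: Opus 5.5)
The natural route is to read both parts off Theorem \ref{Th2.5}, since the rough family $\mathcal{G}$ here is exactly the product family used there. For (i), suppose $A$ is $\mathscr{F}$-closed and $B$ is $\mathscr{L}$-closed, so $\overline{A}^\mathscr{F}=A$ and $\overline{B}^\mathscr{L}=B$. Theorem \ref{Th2.5}(i) then gives
$$\overline{A\times B}^{\mathcal{G}}=\overline{A}^\mathscr{F}\times \overline{B}^{\mathscr{L}}=A\times B,$$
which is precisely the statement that $A\times B$ is $\mathcal{G}$-closed. Part (ii) is the same argument with Theorem \ref{Th2.5}(ii): $A^{o\mathscr{F}}=A$ and $B^{o\mathscr{L}}=B$ give $(A\times B)^{o\mathcal{G}}=A^{o\mathscr{F}}\times B^{o\mathscr{L}}=A\times B$.

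The only delicate point is whether Theorem \ref{Th2.5}(i) can be trusted, particularly when $A$ or $B$ is empty. Its forward inclusion passes from $\{(U\times V)\cap(A\times B)\}\setminus\{(x,y)\}\neq\emptyset$ to a coordinatewise statement, and that step is suspect. A point of $A\times B$ different from $(x,y)$ may agree with $(x,y)$ in one coordinate. So the safer plan is to prove (ii) directly and obtain (i) by complementation.

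For (ii), take $(x,y)\in A\times B$. Because $A$ and $B$ are rough open, there are $U\in\tau$ and $V\in\zeta$ with $F_x\subset U\subset A$ and $L_y\subset V\subset B$. Then $G_{(x,y)}=F_x\times L_y\subset U\times V\subset A\times B$, and $U\times V$ is open in $X\times Y$. Hence $(x,y)\in(A\times B)^{o\mathcal{G}}$, and the reverse inclusion $(A\times B)^{o\mathcal{G}}\subset A\times B$ holds by definition. This argument uses only the easy direction of Theorem \ref{Th2.5}(ii).

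For (i), the plan is to use Theorem \ref{T6}. The complement can be written as
$$(X\times Y)\setminus(A\times B)=\big((X\setminus A)\times Y\big)\cup\big(X\times (Y\setminus B)\big).$$
By Theorem \ref{T6}, $X\setminus A$ is $\mathscr{F}$-open and $Y\setminus B$ is $\mathscr{L}$-open. By Remark \ref{Rem2}(i), $X$ and $Y$ are rough open. Applying (ii), both $(X\setminus A)\times Y$ and $X\times(Y\setminus B)$ are $\mathcal{G}$-open.

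A union of two $\mathcal{G}$-open sets is $\mathcal{G}$-open. This is immediate from the definition: a witnessing open set inside one member of the union also lies inside the union. So the complement of $A\times B$ is $\mathcal{G}$-open, and Theorem \ref{T6} shows that $A\times B$ is $\mathcal{G}$-closed. This route relies only on the unproblematic direction of Theorem \ref{Th2.5} and avoids the coordinatewise step entirely.
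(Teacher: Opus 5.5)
Your proof is correct, and for (i) it takes a different route from the paper.

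The paper reads both parts directly off Theorem \ref{Th2.5}: $\overline{A\times B}^{\mathcal{G}}=\overline{A}^\mathscr{F}\times\overline{B}^\mathscr{L}=A\times B$, and similarly for interiors. For (ii) your direct argument is exactly the easy inclusion $A^{o\mathscr{F}}\times B^{o\mathscr{L}}\subset(A\times B)^{o\mathcal{G}}$ from that theorem's proof, so there the two coincide. For (i) you instead decompose the complement as $\big((X\setminus A)\times Y\big)\cup\big(X\times(Y\setminus B)\big)$. You then use Theorem \ref{T6}, Remark \ref{Rem2}(i), part (ii), and the fact that a union of rough open sets is rough open. Each of these steps checks out.

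The two routes trade off as follows. Yours makes (i) independent of the closure formula in Theorem \ref{Th2.5}(i). The paper's route is shorter and needs only the inclusion $\overline{A\times B}^{\mathcal{G}}\subset\overline{A}^\mathscr{F}\times\overline{B}^\mathscr{L}$.

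Your suspicion about the paper's proof of that inclusion is half right. The written inference to $(U\cap A)\setminus\{x\}\neq\emptyset$ is invalid: take $A=\{x\}$ and a witness $(x,b)$ with $b\neq y$. The inclusion itself is true, though, and the repair takes one line. Suppose $(x,y)\notin A\times B$ is a $\mathcal{G}$-rough limit point and $x\notin A$. Testing with the open set $W=U\times Y\supset F_x\times L_y$, for any open $U\supset F_x$, gives $U\cap A\neq\emptyset$, so $x$ is an $\mathscr{F}$-rough limit point of $A$. If $x\in A$ there is nothing to prove, and the argument for $y$ is symmetric.

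So the paper's one-line derivation stands. Your concern about empty $A$ or $B$ is also unfounded, since both sides of the closure formula are then empty.
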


\begin{proof} \begin{itemize}
        \item[(i)]  Due to the assumptions, $\overline{A}^\mathscr{F}=A$ and $\overline{B}^\mathscr{L}=B$. Moreover, using Theorem \ref{Th2.5}, we obtain
$$\overline{A\times B}^\mathcal{G}=\overline{A}^\mathscr{F}\times \overline{B}^\mathscr{L}=A\times B.$$
 \item[(ii)] Proof is similar to (i), so omitted.
    \end{itemize}
\end{proof}

\section{ Rough homeomorphism with roughness $(\mathscr{F},\mathscr{L})$}\label{Sec3}
In this section, our primary aim is to broaden the concept of homeomorphism by incorporating rough families. We will illustrate that certain spaces, although not homeomorphic in the conventional sense, can be roughly homeomorphic under specific roughness conditions. Now a map \(f: (X,\tau)\to (Y,\zeta)\) is said to be continuous if, for any \(U\in \zeta\), \(f^{-1}(U)\in\tau\). Moreover, \(f\) is a homeomorphism if it is bijective, continuous, and its inverse \(f^{-1}: (Y,\zeta)\to (X,\tau)\) is also continuous. Following these ideas, we next introduce the notion of rough continuity with roughness $(\mathscr{F},\mathscr{L})$, and explore how it contrasts with the usual definition of continuity.

\begin{definition}
 Let $\mathscr{F}=\{F_x: x\in X\}$ and $\mathscr{L}=\{L_y: y\in Y\}$ be two rough families in two topological spaces \((X,\tau)\) and \((Y,\zeta)\) respectively. A map $f: X\rightarrow Y$ is  said to be rough continuous with roughness $(\mathscr{F},\mathscr{L})$ on $X$ or $(\mathscr{F},\mathscr{L})$-continuous on $X$ 
 provided that for any $\mathscr{L}$-open sets $V$ in $Y$,  $f^{-1}(V)$ is $\mathscr{F}$-open in $X$.

 Equivalently, we can say $f:X\to Y$ is  $(\mathscr{F},\mathscr{L})$-continuous on $X$, if for each $\mathscr{L}$-closed set $A$ in $Y$, $f^{-1}(A)$ is $\mathscr{F}$-closed in $X.$
\end{definition}

 The following example shows the incongruity between $(\mathscr{F},\mathscr{L})$-continuity and usual continuity:

\begin{example}
\begin{itemize}
    \item[(i)]  Let us consider a self-mapping $f$ on $\mathbb{R}_u$ defined by $$f(x)=\begin{cases}
        0, &x\in\mathbb{Q};\\
        1, &x\in \mathbb{R}\setminus\mathbb{Q}.
    \end{cases}$$
Now, $f^{-1}(-1,1)=\mathbb{Q},$ which is not an open set in $\mathbb{R}$. So, $f$ is not continuous. 

Let us now consider the rough families $\mathscr{F}$ and $\mathscr{L}$ as follows:
$$\mathscr{F}=\{\{x\}:x\in \mathbb{R}\}~~~\text{and}~~~\mathscr{L}=\{\{y\}\cup\{0,1\}: y\in \mathbb{R}\}.$$
It is plain that $f$ is a $(\mathscr{F},\mathscr{L})$-continuous map. 

\item[(ii)] Now, consider the identity map $f$ in $\mathbb{R}_u$. Clearly, $f$ is continuous. Now, let us define the rough families $\mathscr{F}$ and $\mathscr{L}$ as follows:
$$\mathscr{F}=\{\{x,1\}:x\in \mathbb{R}\}~~~\text{and}~~~\mathscr{L}=\{\{y\}: y\in \mathbb{R}\}.$$
Then any $\mathscr{F}$-open set in $\mathbb{R}$ is of the form $(a,b)$ where $a,b\in \mathbb{R}$ with $a<1<b$ whereas any open interval is also $\mathscr{L}$-open. Hence, if $U=(2,3)$, then $U$ is $\mathscr{L}$-open but $f^{-1}(U)=U$ which is not a $\mathscr{F}$-open set. Consequently, $f$ fails to be a $(\mathscr{F},\mathscr{L})$-continuous map.
\end{itemize}
\end{example}

\begin{theorem} Any $(\mathscr{F},\mathscr{L}^\#)$-continuous map is a continuous map. Moreover, any continuous map is $(\mathscr{F^\#},\mathscr{L})$-continuous map. 
\end{theorem}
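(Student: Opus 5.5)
The plan is to reduce everything to one basic observation: with the trivial rough family, rough openness coincides with ordinary openness. After that, both claims follow from the comparison results Theorem \ref{T1} and Theorem \ref{Th3.7}(ii).

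\emph{Step 1.} For any topological space $(Z,\sigma)$ with trivial rough family $\mathscr{F}^\#=\{\{z\}: z\in Z\}$, a set $A\subset Z$ is $\mathscr{F}^\#$-open if and only if it is open. A point $x\in A$ is a rough interior point with roughness $\mathscr{F}^\#$ exactly when there is $U\in\sigma$ with $\{x\}\subset U\subset A$, that is, exactly when $x\in A^{o}$. Hence $A^{o\mathscr{F}^\#}=A^{o}$. Consequently $A^{o\mathscr{F}^\#}=A$ if and only if $A^{o}=A$, which says that $A$ is open. The same holds for $\mathscr{L}^\#$ in $Y$.

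\emph{Step 2 (first claim).} Let $f$ be $(\mathscr{F},\mathscr{L}^\#)$-continuous and let $V\in\zeta$. By Step 1, $V$ is $\mathscr{L}^\#$-open. By hypothesis, $f^{-1}(V)$ is then $\mathscr{F}$-open in $X$. Since $\mathscr{F}^\#\leq\mathscr{F}$, Remark \ref{Rem2}(iii) (equivalently Theorem \ref{Th3.7}(ii) combined with Step 1) shows that every $\mathscr{F}$-open set is open. Therefore $f^{-1}(V)\in\tau$, and $f$ is continuous.

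\emph{Step 3 (second claim).} Let $f$ be continuous and let $V$ be $\mathscr{L}$-open in $Y$. Since $\mathscr{L}^\#\leq\mathscr{L}$, Theorem \ref{Th3.7}(ii) gives that $V$ is $\mathscr{L}^\#$-open, so $V\in\zeta$ by Step 1. Continuity of $f$ yields $f^{-1}(V)\in\tau$. Applying Step 1 again in $X$, $f^{-1}(V)$ is $\mathscr{F}^\#$-open, so $f$ is $(\mathscr{F}^\#,\mathscr{L})$-continuous.

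There is no real obstacle in this argument. The only point requiring care is Step 1, the identification of $\mathscr{F}^\#$-open sets with open sets. Everything else is monotonicity in the rough family, which is already recorded in Theorem \ref{T1} and Theorem \ref{Th3.7}(ii).
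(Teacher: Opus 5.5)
Your proof is correct and is essentially the paper's argument. Both parts rest on two facts: every rough open set is open (via Theorem \ref{T1}), and $\mathscr{F}^\#$-open sets are exactly the open sets; your Step 1 simply makes explicit the identification $A^{o\mathscr{F}^\#}=A^{o}$ that the paper uses without comment.
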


\begin{proof}
    Let $f: (X,\tau)\to(Y,\zeta)$ be a $(\mathscr{F},\mathscr{L}^\#)$-continuous map and let 
    $V\in \zeta$. Then, $V$ be also a $\mathscr{L}^\#$-open set in $(Y,\zeta)$. Subsequently, $f^{-1}(V)$ is a $\mathscr{F}$-open set and is open in $(X,\tau)$ as well. Hence, the map $f: (X,\tau)\to(Y,\zeta)$ is continuous.

    Again let, $g: (X,\tau)\to(Y,\zeta)$ be a continuous map and let $V$ be a $\mathscr{L}$-open set in $(Y,\zeta)$. Consequently, $V\in \zeta$ implies $g^{-1}(V)\in \tau$ and thus $g^{-1}(V)$ is a $\mathscr{F}^\#$-open set in $(X,\tau)$, which ensures that $g$ is $(\mathscr{F^\#},\mathscr{L})$-continuous.
\end{proof}

\begin{corollary}
\begin{itemize}
    \item[(i)] Any $(\mathscr{F},\mathscr{L})$-continuous map equivalents to a continuous map if and only if $\mathscr{F}:=\mathscr{F^\#}=\{\{\eta\}: \eta\in X\}$ and $\mathscr{L}:=\mathscr{L}^\#=\{\{\xi\}:\xi\in Y\}.$

    \item[(ii)] Any $(\mathscr{F},\mathscr{L}^\#)$-continuous open (or closed) bijective map is a homeomorphism.
\end{itemize}
      
\end{corollary}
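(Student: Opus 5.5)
Part (ii) is immediate from the preceding theorem, so I would dispose of it first and spend the real effort on part (i).

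For (ii), let $f:(X,\tau)\to(Y,\zeta)$ be $(\mathscr{F},\mathscr{L}^\#)$-continuous, bijective and open (or closed). By the preceding theorem, $f$ is continuous. Since $f$ is bijective, for any open $U\subset X$ we have $(f^{-1})^{-1}(U)=f(U)$, which is open by hypothesis. Hence $f^{-1}$ is continuous and $f$ is a homeomorphism. In the closed case, I would run the same argument with closed sets: $(f^{-1})^{-1}(C)=f(C)$ is closed for every closed $C$, which gives continuity of $f^{-1}$ via the closed-set characterization.

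For (i), the "if" direction again uses the preceding theorem. When $\mathscr{L}=\mathscr{L}^\#$, every $(\mathscr{F}^\#,\mathscr{L}^\#)$-continuous map is continuous. Conversely, when $\mathscr{F}=\mathscr{F}^\#$, every continuous map is $(\mathscr{F}^\#,\mathscr{L}^\#)$-continuous. The key observation is that with trivial roughness the $\mathscr{F}^\#$-open sets are exactly the open sets, since $\{x\}\subset U\subset A$ is just $x\in A^o$.

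The "only if" direction is the genuine content, and I expect it to be the main obstacle. The plan is to argue by contrapositive using the identity map, as in part (ii) of the preceding example. Suppose $\mathscr{F}\neq\mathscr{F}^\#$ on $X$, and consider the identity $\mathrm{id}:(X,\tau)\to(X,\tau)$ with roughness $(\mathscr{F},\mathscr{F}^\#)$. It is continuous. For it to fail to be $(\mathscr{F},\mathscr{F}^\#)$-continuous, I need some open set $U$ that is not $\mathscr{F}$-open, i.e.\ some $x\in U$ with $F_x\not\subset V$ for every open $V\subset U$. I would take $x$ with $F_x\neq\{x\}$ and pick $y\in F_x\setminus\{x\}$, then look for an open $U\ni x$ with $y\notin U$. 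This requires some separation, such as $T_1$.

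The weak point is that in a general space (e.g.\ indiscrete) no such $U$ exists, so the claimed equivalence is false as literally stated. I would therefore read the statement as "equivalence for all spaces and maps", in which case exhibiting one space (such as $\mathbb{R}_u$ from the example) suffices. Alternatively, I would add the hypothesis that $X$ is $T_1$ and handle the symmetric case $\mathscr{L}\neq\mathscr{L}^\#$ the same way.
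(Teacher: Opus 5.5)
Your handling of (ii) and of the ``if'' half of (i) is what the paper intends. The paper states the corollary without proof, as a direct consequence of the preceding theorem. An $(\mathscr{F},\mathscr{L}^\#)$-continuous map is continuous, and bijectivity plus openness (or closedness) gives continuity of $f^{-1}$. With both families trivial, the rough open sets are just the open sets. Your diagnosis of the ``only if'' half is also correct. The paper gives no argument beyond its examples, and your identity-map witness is exactly part (ii) of the example following the definition of rough continuity. The literal claim does fail, e.g.\ for indiscrete spaces.

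What needs correcting is your $T_1$ fallback, which does not work as described.

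\textbf{It changes the codomain.} It replaces the codomain and its family by $(X,\tau,\mathscr{F}^\#)$, so it says nothing about a fixed pair $(\mathscr{F},\mathscr{L})$. For fixed spaces the ``only if'' fails even for metric spaces. Take $X=\mathbb{R}_u$ with any $\mathscr{F}\neq\mathscr{F}^\#$ and $Y$ a singleton. The unique map $X\to Y$ is both continuous and $(\mathscr{F},\mathscr{L})$-continuous.

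\textbf{The case $\mathscr{L}\neq\mathscr{L}^\#$ is not symmetric.} By the preceding theorem every continuous map is $(\mathscr{F}^\#,\mathscr{L})$-continuous. So when the domain family is trivial, a discrepancy can only come from a map that is $(\mathscr{F}^\#,\mathscr{L})$-continuous but \emph{not} continuous. An identity map on a fixed space is always continuous, so it can never serve. You need a discontinuous witness. One option is part (i) of the same example. Another is the identity $(Y,\zeta^{\mathscr{L}})\to(Y,\zeta)$ with trivial roughness on the domain, when $Y$ is $T_1$.

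What your identity argument genuinely proves is a clean statement: on a $T_1$ space, the $\mathscr{F}$-open sets coincide with the open sets if and only if $\mathscr{F}=\mathscr{F}^\#$. That is the honest version of (i) worth recording.
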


\begin{theorem}
 Let $f: X\rightarrow Y$ be $(\mathscr{F},\mathscr{L})$-continuous. Then $f$ is also $(\mathscr{F}_\alpha,\mathscr{L}_\beta)$-continuous, where $\mathscr{F}_\alpha$ and $\mathscr{L}_\beta$ are two rough families in $X$ and $Y$, respectively, with $\mathcal{F_\alpha}\leq \mathscr{F}$ and $\mathscr{L}\leq \mathscr{L}_\beta$.
\end{theorem}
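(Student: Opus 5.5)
The plan is to unwind the definition of $(\mathscr{F}_\alpha,\mathscr{L}_\beta)$-continuity and compare the relevant families of rough open sets using Theorem \ref{Th3.7}(ii). Fix an $\mathscr{L}_\beta$-open set $V$ in $Y$. We must show that $f^{-1}(V)$ is $\mathscr{F}_\alpha$-open in $X$. The argument is a two-step chain: first move from $\mathscr{L}_\beta$-open sets to $\mathscr{L}$-open sets in $Y$, and then from $\mathscr{F}$-open sets to $\mathscr{F}_\alpha$-open sets in $X$.

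Step one. Since $\mathscr{L}\leq \mathscr{L}_\beta$, Theorem \ref{Th3.7}(ii), applied in $Y$ with $\mathscr{R}=\mathscr{L}_\beta$, gives that every $\mathscr{L}_\beta$-open set is $\mathscr{L}$-open. So $V$ is $\mathscr{L}$-open. If one prefers to check this directly: for $y\in V$ choose $U\in\zeta$ with $(L_\beta)_y\subset U\subset V$. Then $L_y\subset (L_\beta)_y\subset U\subset V$, so $y\in V^{o\mathscr{L}}$, and hence $V^{o\mathscr{L}}=V$.

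Step two. By the $(\mathscr{F},\mathscr{L})$-continuity of $f$, the set $W:=f^{-1}(V)$ is $\mathscr{F}$-open in $X$. Since $\mathscr{F}_\alpha\leq \mathscr{F}$, Theorem \ref{Th3.7}(ii), now in $X$ with $\mathscr{R}=\mathscr{F}$, shows that $W$ is $\mathscr{F}_\alpha$-open. Again a direct check is available: for $x\in W$ pick $U\in\tau$ with $F_x\subset U\subset W$. Then $(F_\alpha)_x\subset F_x\subset U\subset W$, so $x\in W^{o\mathscr{F}_\alpha}$. Because $V$ was an arbitrary $\mathscr{L}_\beta$-open set, $f$ is $(\mathscr{F}_\alpha,\mathscr{L}_\beta)$-continuous.

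There is essentially no obstacle here. The only point requiring care is the direction of the inequalities: coarsening the roughness on $Y$ shrinks the class of rough open sets, while refining it on $X$ enlarges that class. Both effects therefore go the right way. The equivalent closed-set formulation follows identically, or it can be deduced from Theorem \ref{T6}.
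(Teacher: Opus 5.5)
Your proof is correct and is the paper's argument: use Theorem \ref{Th3.7}(ii) to pass from $\mathscr{L}_\beta$-open to $\mathscr{L}$-open, apply $(\mathscr{F},\mathscr{L})$-continuity, and then pass from $\mathscr{F}$-open to $\mathscr{F}_\alpha$-open. Your direct checks of the two monotonicity steps are a small bonus, since the paper justifies Theorem \ref{Th3.7}(ii) only by citing Theorem \ref{T1}.
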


\begin{proof}
   Let $V$ be a $\mathscr{L}_\beta$-open set in $Y$. According to Theorem \ref{Th3.7}, $V$ is also a $\mathscr{L}$-open set within $Y$. Consequently, $f^{-1}(V)$ is a $\mathscr{F}$-open set, which implies that $f^{-1}(V)$ is also a $\mathscr{F}_\alpha$-open set in $X$. Hence, $f$ is $(\mathcal{F_\alpha},\mathscr{L}_\beta)$-continuous as well.
\end{proof}

As expected, the following result demonstrates that rough continuity ensures the preservation of rough convergence:
\begin{theorem} Let $f: X\rightarrow Y$ be $(\mathscr{F},\mathscr{L})$-continuous on $X$. Then \begin{itemize}
    \item[(i)] for any $\{x_n\}_{n\in \mathbb{N}}$ in $X$ with $x_n\xrightarrow[]{\mathscr{F}} x$ implies $f(x_n)\xrightarrow[]{\mathscr{L}} f(x).$ 
    \item[(ii)] for $A\subset X,$ $f\left(\overline{A}^\mathscr{F}\right)\subseteq \overline{f(A)}^\mathscr{L}.$
\end{itemize}
\end{theorem}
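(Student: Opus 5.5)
The plan is to reduce both parts to the defining property of $(\mathscr{F},\mathscr{L})$-continuity. That property only controls preimages of $\mathscr{L}$-open (equivalently, by Theorem \ref{T6}, $\mathscr{L}$-closed) sets. The tool I would use throughout is an observation following from the definitions: an arbitrary union of $\mathscr{L}$-open sets is $\mathscr{L}$-open. Indeed, if $y\in V_i=V_i^{o\mathscr{L}}$, there is an open $U$ with $L_y\subset U\subset V_i\subset\bigcup_j V_j$. Hence, by Theorem \ref{T6}, an arbitrary intersection of $\mathscr{L}$-closed sets is $\mathscr{L}$-closed.

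For (ii), fix $A\subset X$ and let $C$ be the intersection of all $\mathscr{L}$-closed sets containing $f(A)$. The family is nonempty since $Y$ is $\mathscr{L}$-closed by Remark \ref{Rem2}, and $C$ is $\mathscr{L}$-closed by the observation above. Rough continuity gives that $f^{-1}(C)$ is $\mathscr{F}$-closed and contains $A$. Then Theorem 2.3(i) (monotonicity of the rough closure) gives $\overline{A}^\mathscr{F}\subset\overline{f^{-1}(C)}^\mathscr{F}=f^{-1}(C)$, so $f(\overline{A}^\mathscr{F})\subset C$.

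What remains is to compare $C$ with $\overline{f(A)}^\mathscr{L}$, and I expect this to be the main obstacle. By Theorem \ref{T1} and monotonicity, every $\mathscr{L}$-closed set containing $f(A)$ contains $\overline{f(A)}^\mathscr{L}$, so $\overline{f(A)}^\mathscr{L}\subset C$. This is the wrong direction. Equality $C=\overline{f(A)}^\mathscr{L}$ holds exactly when $\overline{f(A)}^\mathscr{L}$ is itself $\mathscr{L}$-closed. Remark \ref{Rem2}(iv) shows that this can fail in general, since the rough closure is not idempotent.

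So I would first try a direct pointwise argument. Take $y=f(x)$ with $x\in\overline{A}^\mathscr{F}$ and an open $V\supset L_y$. The aim is to produce an $\mathscr{L}$-open $W$ with $L_y\subset W\subset V$. Then $f^{-1}(W)$ is $\mathscr{F}$-open and contains $x$, which gives an open $U$ with $F_x\subset U\subset f^{-1}(W)$. The rough limit property of $x$ yields a point $a\in (U\cap A)\setminus\{x\}$, and then $f(a)\in W\cap f(A)\subset V\cap f(A)$. The natural candidate for $W$ is the union of all $\mathscr{L}$-open subsets of $V$, which is $\mathscr{L}$-open. The issue is whether this union contains $y$ and $L_y$.

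Two points need a supplementary hypothesis:
\begin{itemize}
\item[(a)] Producing $W$ with $y\in W$ seems to need a condition such as the one in Theorem \ref{Th2.2}(ii): $L_z$ open, and $z\in L_y$ implies $L_z\subset L_y$. Under that condition $V^{o\mathscr{L}}$ is open and I would check that it is $\mathscr{L}$-open.
\item[(b)] Even with such a $W$, the conclusion requires $f(a)\neq y$. This needs care whenever $f$ is not injective.
\end{itemize}
I would therefore first attempt the proof under such a regularity hypothesis on $\mathscr{L}$, or argue via $C$ when $\overline{f(A)}^\mathscr{L}$ is $\mathscr{L}$-closed. I would also look for a counterexample in the general case.

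For (i), the same scheme applies with sequences. Given an open $V\supset L_{f(x)}$, choose $W$ as above, so that $x\in f^{-1}(W)$. Since $f^{-1}(W)$ is $\mathscr{F}$-open, there is an open $U$ with $F_x\subset U\subset f^{-1}(W)$. The hypothesis $x_n\xrightarrow{\mathscr{F}}x$ puts $x_n$ eventually in $U$, so $f(x_n)$ lies eventually in $W\subset V$. Here no distinctness issue arises, so the only obstacle is again the existence of the $\mathscr{L}$-open set $W$ between $L_{f(x)}$ and $V$.
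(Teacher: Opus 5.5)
Your obstacle (a) is fatal rather than technical: the statement is false as written, so no argument can close it without an extra hypothesis. Take $X=Y=\{p,q,r\}$, give $X$ the topology $\tau=\{\emptyset,\{r\},\{q,r\},X\}$ and $Y$ the discrete topology, and let $F_p=L_p=\{p,q\}$, $F_q=L_q=\{q,r\}$, $F_r=L_r=\{r\}$. In the discrete space $Y$, a set $B$ is $\mathscr{L}$-open iff $L_y\subseteq B$ for every $y\in B$. So the $\mathscr{L}$-open sets are $\emptyset,\{r\},\{q,r\},Y$. A direct check shows these are exactly the $\mathscr{F}$-open sets of $(X,\tau)$, so the identity $f\colon X\to Y$ is $(\mathscr{F},\mathscr{L})$-continuous. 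Since $X$ is the only $\tau$-open set containing $F_p$, the constant sequence $x_n=r$ satisfies $x_n\xrightarrow{\mathscr{F}}p$, and also $p\in\overline{\{r\}}^{\mathscr{F}}$. But $V=\{p,q\}$ is open in $Y$, contains $L_p$, and misses $r$. Hence $f(x_n)\not\xrightarrow{\mathscr{L}}f(p)$, and $f(p)=p\notin\overline{f(\{r\})}^{\mathscr{L}}=\{q,r\}$, so both (i) and (ii) fail. The failure is exactly your point (a): $q\in L_p$ but $L_q\not\subseteq L_p$, so the smallest $\mathscr{L}$-open set containing $p$ is $Y$, and no $\mathscr{L}$-open set fits between $L_p$ and $V$.

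The paper's proof skips exactly this step, in both parts:
\begin{itemize}
\item[(i)] It tests only $\mathscr{L}$-open sets $V\supseteq L_{f(x)}$. That proves $f(x_n)\to f(x)$ in $(Y,\zeta^{\mathscr{L}})$, not $\mathscr{L}$-rough convergence.
\item[(ii)] It applies rough continuity to an arbitrary open $V\supseteq L_{f(x)}$, which the hypothesis does not allow.
\end{itemize}
Your point (b), by contrast, is harmless. If $f(x)\in f(A)$ there is nothing to prove; otherwise $f(a)\neq f(x)$ for every $a\in A$ automatically.

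The right repair is the cofinality hypothesis you are circling: for every $y\in Y$ and every open $V\supseteq L_y$ there is an $\mathscr{L}$-open $W$ with $L_y\subseteq W\subseteq V$. Under it, your pointwise arguments, with the case split above, prove both (i) and (ii). The condition of Theorem \ref{Th2.2}(ii) imposed on $\mathscr{L}$ gives this in the simplest way: each $L_y$ is then itself $\mathscr{L}$-open, so you can take $W=L_y$. The hypothesis also holds trivially for $\mathscr{L}=\mathscr{L}^\#$. Your route through $C$ only yields (ii) for those $A$ with $\overline{f(A)}^{\mathscr{L}}$ $\mathscr{L}$-closed, so it is not needed once you adopt the cofinality hypothesis.
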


 \begin{proof}\begin{itemize}
     \item[(i)] Let $V$ be an $\mathscr{L}$-open set in $Y$ with $L_{f(x)}\subset V$. Since, $f$ is $(\mathscr{F},\mathscr{L})$-continuous, $f^{-1}(V)$ is an $\mathscr{F}$-open set in $X$ and $x\in f^{-1}(V)$. 
    
       Now, $x_n\xrightarrow[]{\mathscr{F}} x$ implies that there exists $k\in \mathbb{N}$ so that $x_n\in f^{-1}(V)$ for all $n\ge k.$ Consequently, $f(x_n)\in V$ for all $n\ge k$ which implies that $f(x_n)\xrightarrow[]{\mathscr{L}} f(x)$. 

       \item[(ii)] Let $x\in \overline{A}^\mathscr{F}$ and let $V$ be any open set in $Y$ with $L_{f(x)}\subset V.$ Since $f$ is rough continuous with roughness $(\mathscr{F},\mathscr{L}),$ $f^{-1}(V)$ is an open set in $X$ with $F_{x}\subset f^{-1}(V).$ Thus, $$f^{-1}(V)\cap A\setminus\{x\}\neq \emptyset~\text{implies}~V\cap f(A)\setminus \{f(x)\}\neq \emptyset.$$ Hence, $f(x)\in \overline{f(A)}^\mathscr{L}$ and consequently,   $f\left(\overline{A}^\mathscr{F}\right)\subseteq \overline{f(A)}^\mathscr{L}.$
 \end{itemize}
 \end{proof}

We shall now introduce the concept of rough homeomorphism characterized by roughness $(\mathscr{F},\mathscr{L})$. The precise definition is as follows:

\begin{definition}
Let $\mathscr{F}=\{F_x: x\in X\}$ and $\mathscr{L}=\{L_y: y\in Y\}$ be two rough families in two topological spaces \((X,\tau)\) and \((Y,\zeta)\) respectively. A map $f: X\rightarrow Y$ is  said to be rough homeomorphism with roughness $(\mathscr{F},\mathscr{L})$ or $(\mathscr{F},\mathscr{L})$-homeomorphism provided that $f$ is both bijective, $(\mathscr{F},\mathscr{L})$-continuous map and $f^{-1}: Y\to X$ is a $(\mathscr{L},\mathscr{F})$-continuous map. In this case, the spaces $X$ and $Y$ are called rough homeomorphic with roughness $(\mathscr{F},\mathscr{L})$ or $(\mathscr{F},\mathscr{L})$-homeomorphic space. 
\end{definition}

Like rough continuity, here also $(\mathscr{F},\mathscr{L})$-homeomorphism becomes equivalent to homeomorphim if and only if $\mathscr{F}:=\mathscr{F^\#}=\{\{\eta\}: \eta\in X\}$ and $\mathscr{L}:=\mathscr{L}^\#=\{\{\xi\}:\xi\in Y\}.$ The following example demonstrates that two topological spaces may be rough homeomorphic without being homeomorphic:
 \begin{example}
     Let us consider the spaces $(X,\tau)$ as $\mathbb{R}_u$ and $(Y,\zeta)$ as $\mathbb{R}_l$. Then $X$ and $Y$ are not homeomorphic.
     
     We consider the rough families $\mathscr{F}$ and $\mathscr{L}$ on $X$ and $Y$ as follows:
    $$\mathscr{F}=\mathscr{L}=\{[-|x|,|x|]:x\in \mathbb{R}\}=\left\{\begin{cases}
        [-x,x], &x\ge 0;\\
         [x,-x], &x<0;
     \end{cases}: x\in \mathbb{R}\right\}.$$
     Let us construct the map $f: X\to Y$ by $f(x)=-x$ for all $x\in X$. Then, $f$ is a bijective map and $f^{-1}: Y\to X$ is defined by $f^{-1}(x)=-x$ for all $x\in Y$.

     Note that, any $\mathscr{L}$-open set in $Y$ is of the form $(\min\{-a,a\},\max\{-a,a\})$ for $a\in Y$. Now, $$f^{-1}(\min\{-a,a\},\max\{-a,a\})=(\min\{-a,a\},\max\{-a,a\})$$
     which is an $\mathscr{F}$-open set in $X$. Hence, $f: X\to Y$ is a $(\mathscr{F},\mathscr{L})$-continuous map.

     Again, any $\mathscr{F}$-open set in $X$ is of the form $(\min\{-b,b\},\max\{-b,b\})$ for $b\in X$. Also,
    $$\left(f^{-1}\right)^{-1}(\min\{-b,b\},\max\{-b,b\})=(\min\{-b,b\},\max\{-b,b\})$$
     which is a $\mathscr{L}$-open set in $Y$. Consequently, the map $f^{-1}: Y\to X$ is a $(\mathscr{L},\mathscr{F})$-continuous map. 
     
     So, combining both cases, we can conclude that $f: X\to Y$ is a rough homeomorphism with roughness $(\mathscr{F},\mathscr{L})$.
 \end{example}

\section{Topology induced by rough open sets}\label{Sec4}

Motivated by the works of Liu \cite{Liu3} and Zhou \cite{Zhou}, in this section, our main objective is to develop novel topologies on a specified topological space $(X,\tau)$ by utilizing a rough family $\mathscr{F}$. Moreover, in this section, we aim to characterize these newly established topological spaces and analyze their relationship to the existing topology.

\begin{definition}
    Let $(X,\tau)$ be a topological space and let $\mathscr{F}$ be a rough family on $X$. 
    Then the collection $\{A\subset X: A^{o\mathscr{F}}=A\}$ induces a topology on $X$, called rough topology, with roughness $\mathscr{F}$, denoted by $\tau^\mathscr{F}$. The pair $(X,\tau^\mathscr{F})$ is called a rough topological space with roughness $\mathscr{F}$ induced by $(X,\tau)$.
\end{definition}

In this context, we present examples to illustrate how the topology $\tau^\mathscr{F}$ diverges from the initial topology $\tau$.
\begin{example}\label{Ex5}
    Consider the space $\mathbb{R}_l$ and the rough family $\mathscr{F}=\left\{\left\{x,\frac{x}{2}\right\}: x\in \mathbb{R}\right\}$ in $\mathbb{R}$. Then, the family of $\mathscr{F}$-open sets belongs to the following collection:
    $$\beta^\mathscr{F}=\left\{(-a,0),(0,a),[-a,0),[0,a), [-a,b),(-a,b): a,b\in \mathbb{R}~\text{with}~a,b>0\right\}.$$  
    Consequently, the collection $\beta^\mathscr{F}$ forms the basis for the topology $\mathbb{R}^\mathscr{F}_l$. Moreover, $[1,2)\in \mathbb{R}_l$ but $[1,2)^{o\mathscr{F}}=\emptyset\neq [1,2)$ implies $[1,2)\notin \mathbb{R}^\mathscr{F}_l$. Hence, we obtain $\mathbb{R}^\mathscr{F}_l\neq \mathbb{R}_l$.
\end{example}

\begin{example}
    Let $X\neq \emptyset$ and let $a\in X$. We consider the fixed point topology $\tau_a=\{A\subset X: A=\emptyset~\text{or}~ a\in A\}$ and discrete topology $\tau_{disc}$ on $X$. We will show that $\tau_a$ is a rough topology induced by $\tau_{disc}$. Construct the rough family $\mathscr{F}$ as $$\mathscr{F}=\{F_x=\{a,x\}: x\in X\}.$$
    Then, we obtain $$\tau_{disc}^\mathscr{F}=\{\emptyset, X\}\cup\{\cup F_x: x\in X\}.$$ 
    Let $A\in \tau_a$ and let $x\in A$. Eventually, $a\in A$ and thus $F_x=\{x,a\}\in \tau_{disc}^{\mathscr{F}}$ and $F_x\subset A$. This implies $\tau_a\subset \tau_{disc}^\mathscr{F}$. For other direction, let $B\in \tau_{disc}^\mathscr{F}$ and let $y\in B$. Then $\{y,a\}\in \tau_a$ and $\{y,a\}\subset B$ imply that $\tau_{disc}^\mathscr{F}\subset \tau_a$. Combining both cases, we get $\tau_a=\tau_{disc}^\mathscr{F}$.
\end{example}

\begin{remark}
\begin{itemize}
    \item[(i)] In accordance with Remark \ref{Rem2}(iii), it can be asserted that the topology $\tau^\mathscr{F}$ is strictly coarser than the generating topology $\tau$ on $X$.
    
    \item[(ii)] Any continuous map $f: (X,\tau)\to (X,\tau^\mathscr{F})$ is rough continuous with roughness $(\mathscr{F^\#},\mathscr{F}).$
\end{itemize}
\end{remark}

\begin{theorem}\label{Th5.4} Let $\mathscr{F}$ be a rough family in a topological space $(X,\tau)$. Then we have the following: \begin{itemize}
    \item[(i)] $\tau^\mathscr{F} \subset \zeta^{\mathscr{F}}$ for any topology $\zeta$ on $X$ with $\tau \subset \zeta$.
    
    \item[(ii)] $\tau^\mathscr{R} \subset \tau^{\mathscr{F}}$ for any rough family $\mathscr{R}$ in $X$ with $\mathscr{F}\leq \mathscr{R}$.
\end{itemize}
\end{theorem}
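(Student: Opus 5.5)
Both parts follow from the definition $\tau^{\mathscr{F}}=\{A\subset X: A^{o\mathscr{F}}=A\}$ together with Theorem \ref{Th3.7}. Since membership in $\tau^{\mathscr{F}}$ means exactly being $\mathscr{F}$-open in $(X,\tau)$, each inclusion of topologies reduces to an implication between rough openness notions for a fixed set $A\subset X$.

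For (i), I take $A\in\tau^{\mathscr{F}}$, so $A^{o\mathscr{F}}=A$ computed in $(X,\tau)$. By Theorem \ref{Th3.7}(i), $A$ is then also $\mathscr{F}$-open in $(X,\zeta)$ whenever $\tau\subset\zeta$, that is, $A\in\zeta^{\mathscr{F}}$. To keep the argument self-contained I would also verify this directly. For each $x\in A$ there is $U\in\tau$ with $F_x\subset U\subset A$. Since $U\in\zeta$, the same $U$ witnesses that $x$ is a rough interior point of $A$ in $(X,\zeta)$. Hence $A\subset A^{o\mathscr{F}}_\zeta$, and the reverse inclusion is Theorem \ref{T1}.

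For (ii), I take $A\in\tau^{\mathscr{R}}$, so $A^{o\mathscr{R}}=A$. Theorem \ref{T1} gives $A^{o\mathscr{R}}\subset A^{o\mathscr{F}}\subset A$ when $\mathscr{F}\leq\mathscr{R}$, so $A^{o\mathscr{F}}=A$ and $A\in\tau^{\mathscr{F}}$. Directly: any $U\in\tau$ with $R_x\subset U\subset A$ also satisfies $F_x\subset R_x\subset U\subset A$. This is also Theorem \ref{Th3.7}(ii).

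There is no real obstacle here. The only point needing care is that $\tau^{\mathscr{F}}$ is, by the paper's definition, the collection of $\mathscr{F}$-open sets (which is closed under finite intersections by Remark \ref{Rem2}(ii)). Since both inclusions are checked set by set on the defining collections, the argument does not depend on whether one regards $\tau^{\mathscr{F}}$ as that collection or as the topology it generates. In the latter reading, the generating collection of $\tau^{\mathscr{F}}$ is contained in $\zeta^{\mathscr{F}}$ (respectively, the generating collection of $\tau^{\mathscr{R}}$ is contained in $\tau^{\mathscr{F}}$), and a topology generated by a subcollection of a topology is contained in it. So the inclusions persist under that reading too.
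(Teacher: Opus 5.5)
Your proposal is correct and follows the paper's route: the paper's proof simply cites Theorem \ref{Th3.7}, which is exactly the set-by-set implication you use, and your direct check with the same witnessing open set $U$ just unpacks it. Your caveat about the generated topology is harmless. The $\mathscr{F}$-open sets are in fact closed under arbitrary unions as well as finite intersections, so the two readings of $\tau^{\mathscr{F}}$ coincide.
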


\begin{proof}
    The proof is a direct consequence of Theorem \ref{Th3.7} and is thus omitted.
\end{proof}

Theorem \ref{Th5.4} allows us to infer that the rough family serves as a stimulator. Let us consider $\tau$ as the discrete topology on a non-empty set $X$. By defining a rough family $\mathscr{F}$, it is possible to derive other topologies on $X$ that do not satisfy the $T_i~(i=0,1,2,3,4)$ separation axioms. Notably, if $\mathscr{F}=\{F_x=X: x\in X\}$, then $\tau^\mathscr{F}$ becomes the indiscrete topology on $X$. Consequently, the rough family $\mathscr{F}$ enables the transition between finer and coarser topologies on a non-empty set $X$.

 \begin{theorem}\label{L1}
      Let $(X,\tau)$ be a first countable space, and let the elements of $\mathscr{F}$ be open sets. Then $(X,\tau^\mathscr{F})$ is also first countable. But $(X,\tau^\mathscr{F})$ may not be second countable even if $(X,\tau)$ is so.
 \end{theorem}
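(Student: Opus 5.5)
The plan has two parts: a short argument for first countability that hinges on a smallest $\tau^\mathscr{F}$-open neighbourhood of each point, and an attempt at the counterexample for second countability, which I expect to fail when the $F_x$ are open.

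\textbf{First countability.} Fix $x\in X$ and build the ``rough saturation'' of $F_x$ by iteration. Put $S_0(x)=F_x$ and $S_{n+1}(x)=\bigcup_{y\in S_n(x)}F_y$, and let $S(x)=\bigcup_{n\ge 0}S_n(x)$. Since every $F_y$ is $\tau$-open, $S(x)$ is $\tau$-open. If $y\in S(x)$, say $y\in S_n(x)$, then $F_y\subset S_{n+1}(x)\subset S(x)$. Taking $U=S(x)$ in the definition of a rough interior point gives $F_y\subset U\subset S(x)$, so $S(x)^{o\mathscr{F}}=S(x)$ and $S(x)\in\tau^\mathscr{F}$.

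Next I show minimality. If $A\in\tau^\mathscr{F}$ and $x\in A$, there is $U\in\tau$ with $F_x\subset U\subset A$, so $F_x\subset A$. Applying the same reasoning to each $y\in A$ gives, by induction, $S_n(x)\subset A$ for all $n$, hence $S(x)\subset A$. Thus $S(x)$ is the smallest $\tau^\mathscr{F}$-neighbourhood of $x$, and $\{S(x)\}$ is a countable, indeed one-element, local base. This shows $(X,\tau^\mathscr{F})$ is first countable. The argument uses only the openness of the $F_x$, not the first countability of $\tau$.

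\textbf{Second countability.} I would first look for a second countable $(X,\tau)$ with open $F_x$ having uncountably many distinct saturations $S(x)$. Any base of $\tau^\mathscr{F}$ must contain every minimal neighbourhood $S(x)$, so this would be enough. This is where I expect the argument to break down.

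Let $\mathcal{B}$ be a countable base of $\tau$, and for each $x$ choose $B_x\in\mathcal{B}$ with $x\in B_x\subset S(x)$. Suppose $B_x=B_z$. Then $x\in B_z\subset S(z)$, and minimality gives $S(x)\subset S(z)$; by symmetry $S(x)=S(z)$. So $x\mapsto S(x)$ factors through the countable set $\mathcal{B}$. Consequently there are only countably many distinct sets $S(x)$, and these form a countable base of $\tau^\mathscr{F}$.

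Hence, under the stated hypothesis that the members of $\mathscr{F}$ are open, $(X,\tau^\mathscr{F})$ inherits second countability. Any witness to the ``may not be second countable'' clause must therefore use non-open $F_x$. Such an example cannot satisfy the theorem's stated hypothesis that the members of $\mathscr{F}$ are open, so the second clause as worded cannot be witnessed.

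To confirm this, I would check that no choice of open $F_x$ escapes the injection argument above; none can, since the argument uses only that each $S(x)$ is $\tau$-open and minimal. I would then flag the second assertion, read with the stated openness hypothesis, as needing correction rather than proof.
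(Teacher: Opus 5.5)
Your analysis is correct on both clauses. For the first clause you take a genuinely different, and sounder, route than the paper.

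The paper takes a countable $\tau$-local base $\{\beta^x_n\}$ and declares $\{\beta^x_n\cup F_x\}$ a local base at $x$ in $\tau^{\mathscr F}$. It checks only that every $\mathscr F$-open set containing $x$ contains some $\beta^x_N\cup F_x$. It never checks that these sets are $\tau^{\mathscr F}$-neighbourhoods of $x$, and in general they are not, since they need not be closed under $y\mapsto F_y$. For example, with $F_x=(x-1,x+1)$ in $\mathbb R_u$, $\tau^{\mathscr F}$ is indiscrete, yet $\beta^x_n\cup F_x=(x-1,x+1)$ for $\beta^x_n=(x-\tfrac{1}{n},x+\tfrac{1}{n})$.

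Your saturation $S(x)$ supplies exactly what is missing. The $\mathscr F$-open sets are precisely the $\tau$-open sets $A$ with $F_y\subset A$ for all $y\in A$, and they already form a topology. Hence $S(x)$ is the smallest $\tau^{\mathscr F}$-open set containing $x$, and first countability of $\tau$ plays no role.

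Openness of the $F_y$, by contrast, is essential, although the paper's argument never uses it. In $\mathbb R_u$, take $F_x=\mathbb Z$ for $x\in\mathbb Z$ and $F_x=\{x\}$ otherwise. The $\tau^{\mathscr F}$-neighbourhoods of $0$ are exactly the sets containing some usual open set $U\supset\mathbb Z$. A diagonal choice of radii at the integers then shows that $0$ has no countable local base, even though the paper's reasoning would go through word for word.

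On the second clause, your argument that $B_x=B_z$ forces $S(x)=S(z)$ is right. So under the stated openness hypothesis, second countability is inherited and the clause as written is false; flagging it rather than proving it is the correct call.

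The paper's witness $F_x=\{x,2|x|\}$ in $\mathbb R_u$ violates that hypothesis, and it fails even on its own terms. The topology computed there, $\{(a,\infty):a\in\mathbb R\}$, has the countable base $\{(q,\infty):q\in\mathbb Q\}$. The true $\tau^{\mathscr F}$ also contains sets such as $\bigcup_{n\ge 0}(2^n,\tfrac{11}{10}2^n)$, but it is still second countable, with base the saturations of rational intervals.

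What does survive is the clause with the openness hypothesis dropped. The $\mathbb Z$-collapsing family above witnesses it, since there $\tau$ is second countable while $\tau^{\mathscr F}$ is not even first countable. Recording that example would complete your treatment.
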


  \begin{proof}
      Let $x\in X.$ Then, there is a countable local basis $\{\beta^x_n\}$ in $(X,\tau)$ containing $x$. Now, consider the collection $$\mathscr{F}(\beta^x_n)=\beta^x_n\cup F_x\ \text{ for all }n.$$
      
      Let $U$ be a $\mathscr{F}$-open set in $X$. Then, $U\in \tau$. So, there exists a $\beta_N^x\in \{\beta^x_n\}$ such that $x\in \beta^x_N\subset U$. Also, $F_x\subset U$. Hence, $\beta^x_N\cup F_x\subset U$ and $\beta^x_N\cup F_x\in \{\mathscr{F}(\beta^x_n)\}$.
      Consequently, the collection $\{\mathscr{F}(\beta^x_n)\}$ forms a countable basis at $x$ in $(X,\tau^\mathscr{F})$.

      For the subsequent part, we consider the rough family $\mathscr{F}$ in the second countable space $\mathbb{R}_u$ as $$\mathscr{F}=\{F_x=\{x,2|x|\}: x\in \mathbb{R}\}.$$
      Then $\mathbb{R}^\mathscr{F}_u=\{(a,\infty):  a\in \mathbb{R}\},$ which is not a second countable space.
  \end{proof}

\begin{theorem}\label{Th5.1}
   For any sequence $\{x_n\}_{n\in \mathbb{N}}$ and any $x$ in $X,$ $x_n\xrightarrow[]{\mathscr{F}} x$ in $(X,\tau)$ implies $x_n\xrightarrow[]{} x$ in $\left(X,\tau^\mathscr{F}\right)$.  The converse may not always be true.
\end{theorem}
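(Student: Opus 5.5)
The forward implication follows from unwinding the definitions. The notation $x_n\xrightarrow[]{\mathscr{F}} x$ means rough convergence with $\mathcal{I}=\mathcal{I}_{fin}$: for every $U\in\tau$ with $F_x\subset U$, we have $x_n\in U$ for all but finitely many $n$.

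For the forward direction, fix a $\tau^\mathscr{F}$-open set $V$ with $x\in V$; it suffices to show that $x_n\in V$ eventually. Since $V$ is $\mathscr{F}$-open, $V^{o\mathscr{F}}=V$, so $x$ is a rough interior point of $V$. Hence there is $U\in\tau$ with $F_x\subset U\subset V$. Rough convergence then gives $k\in\mathbb{N}$ with $x_n\in U\subset V$ for all $n\ge k$. Thus $x_n\to x$ in $(X,\tau^\mathscr{F})$. The argument is the same as the one used for part (i) of the rough-continuity theorem, applied to the identity map.

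For the failure of the converse, I need a space in which $\tau^\mathscr{F}$ has strictly fewer neighbourhoods at some point than the family $\{U\in\tau: F_x\subset U\}$ supplies. That family consists of $\tau$-open sets containing $F_x$ that need not themselves be $\mathscr{F}$-open. I would use the example from the proof of Theorem \ref{L1}: $\mathbb{R}_u$ with $\mathscr{F}=\{\{x,2|x|\}: x\in\mathbb{R}\}$, for which $\mathbb{R}^\mathscr{F}_u=\{(a,\infty): a\in\mathbb{R}\}$ (together with $\emptyset$ and $\mathbb{R}$). In this topology the constant sequence $x_n=5$ converges to $x=1$, because every $\tau^\mathscr{F}$-open set containing $1$ has the form $(a,\infty)$ with $a<1$ and therefore contains $5$. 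However, $F_1=\{1,2\}\subset U=(0.5,2.5)\in\tau$ and $5\notin U$, so $x_n\not\xrightarrow[]{\mathscr{F}} 1$.

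The only real care needed is in checking the description of $\mathbb{R}^\mathscr{F}_u$ so that the counterexample is airtight. If verifying that is awkward, a simpler fallback is the fixed point example: take $\tau_{disc}$ on $X$ with $F_x=\{a,x\}$, so that $\tau^\mathscr{F}_{disc}=\tau_a$. Then the constant sequence $x_n=a$ converges in $\tau_a$ to any $x\neq a$, since every nonempty open set contains $a$. But the open set $\{a,x\}\supset F_x$ in $\tau_{disc}$ does not separate the issue, so instead I would choose $x_n=b$ with $b\notin\{a,x\}$. This requires $|X|\ge 3$: then $x_n=b$ lies in no $\tau_a$-neighbourhood $\{a,x\}$, so that choice fails as well. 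Hence I would rely on the $\mathbb{R}_u$ example, whose verification is routine.
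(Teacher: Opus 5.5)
Your forward direction is correct and matches the paper's argument. The converse, however, has a genuine gap. Your counterexample relies on the description $\mathbb{R}^\mathscr{F}_u=\{(a,\infty):a\in\mathbb{R}\}$ quoted from the proof of Theorem~\ref{L1}, and that description is false. Unwinding the definition, a set $A$ is $\mathscr{F}$-open in $\mathbb{R}_u$ exactly when $A$ is open and $x\in A$ implies $2|x|\in A$. For instance, $A=\bigcup_{n\ge 0}(0.9\cdot 2^n,\,1.1\cdot 2^n)$ is open, and doubling maps each interval onto the next, so $A$ is $\mathscr{F}$-open. It contains $1$ but not $5$. Hence the constant sequence $x_n=5$ does \emph{not} converge to $1$ in $(\mathbb{R},\tau_u^{\mathscr{F}})$, and the counterexample collapses. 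You identified this check as the crucial point, then called it routine without carrying it out.

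Your fallback with $\tau_a$ cannot work either, as you half-noticed. There, the $\tau_a$-neighbourhoods of $x$ and the $\tau_{disc}$-open sets containing $F_x=\{a,x\}$ are literally the same sets, so the two notions of convergence coincide.

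The missing idea is what actually separates the two neighbourhood systems. A $\tau^\mathscr{F}$-open set $V$ containing $x$ must contain $F_y$ for every $y\in V$. So it contains $F_x$, then $F_y$ for each $y\in F_x$, and so on along this chain. A rough neighbourhood of $x$, by contrast, need only be an open set containing $F_x$. So take the sequence at a point two steps along the chain that some open set around $F_x$ misses.

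In your own example, the constant sequence $x_n=4$ works. Any $\mathscr{F}$-open $V\ni 1$ contains $F_1=\{1,2\}$ and hence $F_2=\{2,4\}$, so $x_n\to 1$ in $\tau_u^{\mathscr{F}}$. On the other hand, $(0.5,2.5)$ is $\tau_u$-open, contains $F_1$, and misses $4$, so $x_n$ does not converge roughly to $1$. The paper's own counterexample ($\mathbb{R}_l$, $F_x=\{x,x/2\}$, a sequence hitting $1/4$ infinitely often, limit $1$, separating set $[1/2,2)$) succeeds for exactly this reason.
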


\begin{proof}
    Let $U$ be any open set in $\left(X,\tau^\mathscr{F}\right)$ containing $x.$ Then, there exists $V\subset X$ such that $x\in V\subset U$ and $V^{o\mathscr{F}}=V$. Hence, $x\in F_x\subset V\subset U$ and $V\in \tau$ as well. Now $x_n\xrightarrow[]{\mathscr{F}} x$ in $(X,\tau)$ implies that there exists $k\in\mathbb{N}$ so that for all $n\ge k,$ $x_n\in V.$ Thus $x_n\in U$ for all $n\ge k$ implies $x_n\xrightarrow[]{} x$ in $\left(X,\tau^\mathscr{F}\right).$\\

   For the converse part, we revisit Example \ref{Ex5}. Let us consider the sequence $\{x_n\}_{n\in \mathbb{N}}$ presented below $$x_n=\begin{cases}\frac{1}{4},&\text{if}~n~\text{is even};\\
     1, &\text{if}~n~\text{is odd}.
     \end{cases}$$
     Then, for any open set $U$ in $\mathbb{R}^\mathscr{F}_l$ containg $1$ is one of the following form $$(0,b)~\text{or}~[0,b)~\text{or}~(-a,b)~\text{or}~[-a,b)~\text{where}~a,b\in \mathbb{R}~\text{with}~0<a~\text{and}~1<b.$$
     In all the cases, $x_n\in U$ for all $n\in \mathbb{N}.$ Hence, $x_n\xrightarrow[]{} 1$ in $\left(X,\tau^\mathscr{F}\right).$ Moreover, for any $x\ge 1$, $x_n\xrightarrow[]{} x$ in $\left(X,\tau^\mathscr{F}\right).$ 

     Now, $V=\left[\frac{1}{2},2\right)$ is an open set in $\mathbb{R}_l$ with $F_1\subset V$. But for every $n\in 2\mathbb{N}$, $x_n\notin V$. Hence, $x_n\not\xrightarrow[]{\mathscr{F}} x$ and we are done.
\end{proof}

The second part of Theorem \ref{Th5.1} implies that the limit $x_n\to x$, where $x\in X$ is not necessarily unique.  Consequently, the space $\left(X,\tau^\mathscr{F}\right)$ need not be a $T_2$-space. This naturally raises the question of whether the space $(X,\tau^\mathscr{F})$ can ever be a $T_2$-space? The next theorem provides a negative answer.

\begin{theorem}
     The space $\left(X,\tau^\mathscr{F}\right)$ is not a $T_2$-space provided that $\mathscr{F} \neq \mathscr{F^\#}$.
\end{theorem}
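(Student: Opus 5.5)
The plan is to exhibit one pair of distinct points that cannot be separated by disjoint $\tau^\mathscr{F}$-open sets. Since $\mathscr{F}\neq\mathscr{F^\#}$, there is some $x_0\in X$ with $F_{x_0}\neq\{x_0\}$. Because $x_0\in F_{x_0}$ always holds, we may pick $y_0\in F_{x_0}$ with $y_0\neq x_0$. I will show that every $\tau^\mathscr{F}$-open set containing $x_0$ also contains $y_0$. This already rules out the $T_2$ property: if $U\ni x_0$ and $W\ni y_0$ were disjoint $\tau^\mathscr{F}$-open sets, then $y_0\in U\cap W$, a contradiction.

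The first step is to describe the $\tau^\mathscr{F}$-open sets concretely. By definition $\tau^\mathscr{F}$ is induced by the $\mathscr{F}$-open sets $\{A: A^{o\mathscr{F}}=A\}$. These are closed under finite intersections by Remark \ref{Rem2}(ii). They are also closed under arbitrary unions: if $x\in\bigcup_i A_i$ with $x\in A_j$, then there is $U\in\tau$ with $F_x\subset U\subset A_j\subset\bigcup_i A_i$. Hence the $\tau^\mathscr{F}$-open sets are exactly the $\mathscr{F}$-open sets. Even without this observation, the argument in the proof of Theorem \ref{Th5.1} gives what is needed: any $\tau^\mathscr{F}$-open $U\ni x$ contains an $\mathscr{F}$-open $V$ with $x\in V\subset U$.

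The second step is the key containment. Let $U$ be $\tau^\mathscr{F}$-open with $x_0\in U$, and take an $\mathscr{F}$-open $V$ with $x_0\in V\subset U$. Then $x_0\in V=V^{o\mathscr{F}}$, so $x_0$ is a rough interior point of $V$. Thus there is $G\in\tau$ with $F_{x_0}\subset G\subset V$, and therefore $y_0\in F_{x_0}\subset V\subset U$. Combined with the first paragraph, this shows that $x_0$ and $y_0$ are distinct points without disjoint neighbourhoods, so $(X,\tau^\mathscr{F})$ is not $T_2$.

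The only point needing care is the first step, namely identifying the open sets of $\tau^\mathscr{F}$ with $\mathscr{F}$-open sets, or at least showing that each $\tau^\mathscr{F}$-neighbourhood of a point contains an $\mathscr{F}$-open neighbourhood of it. The closure of $\mathscr{F}$-open sets under unions, checked above, settles this; everything else is immediate from the definitions.
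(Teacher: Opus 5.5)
Your proof is correct and takes essentially the same route as the paper. The paper also picks $z$ with some $a\neq z$ in $F_z$, and uses that every $\tau^{\mathscr{F}}$-open set containing $z$ contains an $\mathscr{F}$-open $V$ with $F_z\subset V$, hence contains $a$. The differences are only in packaging: the paper concludes via the constant sequence $x_n=a$ converging to both $a$ and $z$, whereas you state the non-separation of $x_0$ and $y_0$ directly, and you also verify that $\mathscr{F}$-open sets are closed under unions, which the paper takes for granted.
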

\begin{proof}
    Since  $\mathscr{F} \neq \mathscr{F^\#}$, there exists $z\in X$ so that $a\in F_z$ where $a\neq z$. Consider the sequence $x_n=a$ for all $n\in \mathbb{N}$. Consequently, we get  $x_n\rightarrow a$. Now, for any $U\in \tau^{\mathscr{F}}$ that contains $z$ must contain a $V\subset X$ where $z\in V$ and ${V}^{o\mathscr{F}}=V$. This implies $z\in F_z\subset V\subset U$ and thus $x_n\rightarrow z$ as well. 
\end{proof}

\begin{corollary}
     The space $\left(X,\tau^\mathscr{F}\right)$ can never be a $T_3 ~\&~T_4$-space and thus $\left(X,\tau^\mathscr{F}\right)$ is not a metrizable space for any topology $\tau$ on $X$ and for any rough family $\mathscr{F} \neq \mathscr{F^\#}$.
\end{corollary}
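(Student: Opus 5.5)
The plan is to obtain the corollary from the preceding theorem, which says that $\left(X,\tau^\mathscr{F}\right)$ is not $T_2$ whenever $\mathscr{F}\neq\mathscr{F^\#}$, together with the standard chain of separation axioms. A space that is $T_3$ or $T_4$ (regular and $T_1$, resp. normal and $T_1$) is in particular $T_2$. So the corollary follows by contraposition once we check that the conventions fit.

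The key step is to get $T_1$ for free, or to avoid needing it. We can show directly that $\left(X,\tau^\mathscr{F}\right)$ is not even $T_1$. Pick $z\neq a$ with $a\in F_z$, as in the proof of the preceding theorem. Every $\tau^\mathscr{F}$-open set $U$ containing $z$ is $\mathscr{F}$-open, so $z$ is a rough interior point of $U$. Hence $F_z\subset U$, and in particular $a\in U$. Thus $z$ and $a$ cannot be separated even in the $T_1$ sense, and $\{a\}$ is not $\tau^\mathscr{F}$-closed.

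Under the convention $T_3=\text{regular}+T_1$ and $T_4=\text{normal}+T_1$, the space therefore fails $T_3$ and $T_4$ outright. Under the alternative convention where $T_3$ means regular with $T_0$, I would argue regularity fails instead. Separate the point $z$ from the closed set $\overline{\{a\}}$ if it omits $z$; if it does not, use that a regular $T_0$ space is $T_2$, contradicting the preceding theorem. This convention-dependence is the only delicate point. I expect to handle it by fixing the paper's implicit convention that $T_3$ and $T_4$ include $T_1$.

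Finally, every metrizable space is $T_2$ (indeed $T_4$). The preceding theorem forbids $T_2$, so $\left(X,\tau^\mathscr{F}\right)$ is not metrizable. No property of $\tau$ is used, so this holds for every topology $\tau$ and every rough family $\mathscr{F}\neq\mathscr{F^\#}$.
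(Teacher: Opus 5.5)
Your proposal is correct and follows the paper. The paper gives no separate proof of this corollary and treats it as immediate from the preceding theorem, since $T_3$, $T_4$ and metrizable spaces are all $T_2$. Your direct observation that $F_z$ lies in every $\tau^{\mathscr{F}}$-open set containing $z$ is exactly the fact used in the paper's proof of that theorem, and it shows the space is not even $T_1$.

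Your choice to fix the convention that $T_3$ and $T_4$ include $T_1$ is genuinely necessary, not cosmetic. With $\tau$ discrete on $X=\{0,1\}$, $F_0=\{0,1\}$ and $F_1=\{1\}$, one gets $\tau^{\mathscr{F}}=\{\emptyset,\{1\},X\}$, the Sierpi\'nski space, which is normal and $T_0$. With $F_x=X$ for all $x$ one gets the indiscrete topology, which is vacuously regular and normal.
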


Now, what about the converse part, i.e., for any non-$T_2$-space $(X,\zeta)$, whether there exists another topology $\tau$ and a rough family $\mathscr{F}$ on $X$, so that $\tau^\mathscr{F}=\zeta$. In the following theorem, we give an answer to this query:

\begin{theorem}
    A topological space $(X,\tau)$ is not $T_2$ if and only if there exists a rough family $\mathscr{F}$ on $X$ such that $\tau=\tau_{disc}^\mathscr{F}$, where $\tau_{disc}$ is the discrete topology on $X$.
\end{theorem}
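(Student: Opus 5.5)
The plan splits the statement into its two implications. The characterisation I will use throughout follows directly from the definitions. In $\tau_{disc}$ every set is open, so $x\in A^{o\mathscr{F}}$ iff $x\in A$ and $F_x\subset A$. Hence
$$\tau_{disc}^{\mathscr{F}}=\{A\subset X:\ F_x\subset A \text{ for every } x\in A\}.$$

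\emph{($\Leftarrow$)} Suppose $\tau=\tau_{disc}^{\mathscr{F}}$. If $\mathscr{F}\neq\mathscr{F}^\#$, the preceding theorem (the one showing $(X,\tau^{\mathscr{F}})$ is not $T_2$) applies verbatim with $\tau_{disc}$ in place of $\tau$. There is a point $z$ and a point $a\neq z$ with $a\in F_z$. Every $\tau$-open set containing $z$ contains $F_z\ni a$, so $z$ and $a$ cannot be separated. If $\mathscr{F}=\mathscr{F}^\#$, then $\tau_{disc}^{\mathscr{F}}=\tau_{disc}$, which is $T_2$. This case is excluded by the paper's standing convention $\mathscr{F}\neq\mathscr{F}^\#$, and I would say so explicitly.

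\emph{($\Rightarrow$)} Given a non-$T_2$ space $(X,\tau)$, the natural candidate is
$$F_x=\bigcap\{U\in\tau:\ x\in U\}.$$
This is a rough family since $x\in F_x$. By the displayed characterisation, $A\in\tau_{disc}^{\mathscr{F}}$ iff $A=\bigcup_{x\in A}F_x$. If each $F_x$ is $\tau$-open, then every $\tau$-open $A$ satisfies this, because $F_x\subset A$ whenever $x\in A$. Conversely, every such union of the $F_x$ is $\tau$-open, so $\tau=\tau_{disc}^{\mathscr{F}}$. The fixed point topology example earlier in the paper is exactly this construction, with $F_x=\{a,x\}$.

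The hard step, and in fact the genuine obstacle, is showing that each $F_x$ is open. This cannot be done in general. Every $\tau_{disc}^{\mathscr{F}}$ is closed under arbitrary intersections: if each $A_i$ satisfies $F_x\subset A_i$ for $x\in A_i$, so does $\bigcap_i A_i$. Thus every such topology is Alexandrov. The cofinite topology on an infinite set is not $T_2$ and not Alexandrov, since the intersection of the open sets $X\setminus\{y\}$ over $y\neq x$ is $\{x\}$, which is not open. So it cannot equal any $\tau_{disc}^{\mathscr{F}}$. My plan is therefore to prove the forward implication under the additional hypothesis that $\tau$ is Alexandrov (for example, when $X$ is finite), using the construction above. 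I would also point out the cofinite counterexample, showing that the unrestricted ``only if'' fails. The correct characterisation reads: a topology is of the form $\tau_{disc}^{\mathscr{F}}$ with $\mathscr{F}\neq\mathscr{F}^\#$ iff it is a non-discrete Alexandrov topology. Non-$T_2$ then follows from ($\Leftarrow$), because a non-discrete Alexandrov topology has some $F_z\neq\{z\}$.
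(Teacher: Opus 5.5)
Your proposal is correct. On the forward implication, it is the paper's argument that fails, not yours.

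For ``$\tau=\tau_{disc}^{\mathscr{F}}$ implies not $T_2$'' you argue exactly as the paper does, applying the preceding theorem with $\tau_{disc}$ in place of $\tau$. You rightly note that this direction needs the standing convention $\mathscr{F}\neq\mathscr{F}^\#$, since $\tau_{disc}^{\mathscr{F}^\#}=\tau_{disc}$ is $T_2$.

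For the converse, the paper takes $F_x$ to be the class of $x$ under a relation defined through nets. Read literally (``there exists a net with $x_i\to x\iff x_i\to y$''), the relation links every pair of points in any $T_1$ space with at least three points: use a constant net at a third point. Read as presumably intended (topological indistinguishability), it is an equivalence relation. But then $\tau_{disc}^{\mathscr{F}}$ is the family of all unions of classes, not $\{\emptyset,X\}\cup\{F_x: x\in X\}$ as written. This is a partition topology, so it is closed under complements. The inclusion $\tau\subset\tau_{disc}^{\mathscr{F}}$ then holds. The unproved step ``similarly, $\tau_{disc}^{\mathscr{F}}\subset\tau$'' is false already for the Sierpi\'nski space $X=\{a,b\}$, $\tau=\{\emptyset,\{a\},X\}$. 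There the paper's construction returns $\tau_{disc}$ (or the indiscrete topology under the literal reading). Your minimal-neighbourhood family $F_a=\{a\}$, $F_b=\{a,b\}$ does give $\tau$.

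Your observation that $\tau_{disc}^{\mathscr{F}}=\{A: F_x\subset A \text{ for all } x\in A\}$ is closed under arbitrary intersections shows that no choice of $\mathscr{F}$ can rescue the statement. The cofinite topology on an infinite set is a valid counterexample, both to this theorem and to the corollary that follows it. Your replacement characterisation is correct and complete: the topologies $\tau_{disc}^{\mathscr{F}}$ with $\mathscr{F}\neq\mathscr{F}^\#$ are exactly the non-discrete Alexandrov topologies, all of which fail $T_2$, and your proof via $F_x=\bigcap\{U\in\tau: x\in U\}$ establishes it.
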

\begin{proof} One part is a direct consequence of the previous theorem. So we proceed to the other part. Since $(X,\tau)$ is not a $T_2$-space, there exists a net $\{x_i\}_{i\in I}$ in $X$ that converges more than one element in $X$.

    Define a relation $\sim$ on $X$ as $x\sim y$ if there exists a net $\{x_i\}_{i\in I}$ in $X$ such that $x_i\rightarrow x\iff x_i\rightarrow y$. Subsequently, it forms an equivalence relation on $X$. Construct the rough family $\mathscr{F}$ as \[\mathscr{F}=\{F_x=\text{equivalence class of }x: x\in X\}.\]
    Then in $(X,\tau_{disc})$, the collection $$\{A\subset X: A^{o\mathscr{F}}=A\}=\{\emptyset, X\}\cup\{F_x: x\in X\},$$ and thus $$\tau_{disc}^\mathscr{F}=\{\emptyset, X\}\cup\{\cup F_x: x\in X\}.$$ Now our aim is to prove $\tau\subset \tau_{disc}^\mathscr{F}$. Let $A\in \tau$. If $A=\emptyset\text{ or }X$, then there is nothing to prove. So, let $A\subsetneq X$. Now for $a\in A$, either $F_a=\{a\}\subset A$ or if $b\in F_a$, then $b\in A$. Otherwise, the net $x_i=a$ for all $i\in I$ does not converge to $b$, which contradicts the construction of $F_a$. Hence, we have $\tau \subset \tau_{disc}^\mathscr{F}$. Similarly, we can prove that $\tau_{disc}^\mathscr{F}\subset \tau$ and thus $\tau=\tau_{disc}^\mathscr{F}$. 
\end{proof}

\begin{corollary}
    A topological space $(X,\tau)$ is $T_2$ if and only if there is no rough family $\mathscr{F}$ other than $\mathscr{F}^\#$ such that $\tau=\tau_{disc}^\mathscr{F}$, where $\tau_{disc}$ is the discrete topology in $X$.
\end{corollary}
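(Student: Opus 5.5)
This corollary is the contrapositive of the preceding theorem, so the plan is to argue both implications by negation and invoke that theorem directly. The theorem says that $(X,\tau)$ fails to be $T_2$ exactly when some rough family $\mathscr{F}$ satisfies $\tau=\tau_{disc}^\mathscr{F}$. The one subtlety is that the theorem's $\mathscr{F}$ is implicitly meant to be nontrivial. Indeed, for $\mathscr{F}^\#$ we have $A^{o\mathscr{F}^\#}=A^o$, so $\tau_{disc}^{\mathscr{F}^\#}=\tau_{disc}$. Hence $\tau=\tau_{disc}^{\mathscr{F}^\#}$ holds when $\tau$ is discrete, and that space is $T_2$. The corollary therefore has to be read with $\mathscr{F}\neq\mathscr{F}^\#$, and I would make this explicit first.

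For the forward direction, suppose $(X,\tau)$ is $T_2$ and, for contradiction, $\tau=\tau_{disc}^\mathscr{F}$ for some $\mathscr{F}\neq\mathscr{F}^\#$. The theorem asserting that $(X,\tau^\mathscr{F})$ is never $T_2$ when $\mathscr{F}\neq\mathscr{F}^\#$ applies to any base topology, in particular $\tau_{disc}$. Concretely, pick $z$ and $a\neq z$ with $a\in F_z$. Every $\tau_{disc}^\mathscr{F}$-open set containing $z$ contains $F_z$, hence contains $a$. So the constant sequence $a$ converges to both $a$ and $z$, and $(X,\tau)$ is not $T_2$, a contradiction.

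For the converse, suppose $(X,\tau)$ is not $T_2$. The preceding theorem produces a rough family $\mathscr{F}$, built from the equivalence classes of points sharing limits, with $\tau=\tau_{disc}^\mathscr{F}$. It remains to check $\mathscr{F}\neq\mathscr{F}^\#$. Since $X$ is not $T_2$, some net converges to two distinct points $x\neq y$, so $x\sim y$ and $F_x\supseteq\{x,y\}$ is not a singleton.

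The main obstacle is that this last check, and the converse as a whole, rests on the construction in the preceding theorem, whose relation $\sim$ is only loosely justified (transitivity in particular). I will accept that theorem as stated. I will only add the observation that a net with two distinct limits forces a non-singleton class, which is exactly what the strengthened statement needs.
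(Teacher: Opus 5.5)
Your forward direction ($T_2$ implies there is no $\mathscr{F}\neq\mathscr{F}^\#$ with $\tau=\tau_{disc}^{\mathscr{F}}$) is correct. So is your observation that $\tau_{disc}^{\mathscr{F}^\#}=\tau_{disc}$, which forces the nontriviality of $\mathscr{F}$ to be built into the statement. The gap is the converse. You propose to accept the preceding theorem as stated, but its nontrivial direction (not $T_2$ implies $\tau=\tau_{disc}^{\mathscr{F}}$ for some $\mathscr{F}$) is false, and so is the corresponding direction of this corollary.

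The defect is not the transitivity of $\sim$. It is the claimed equality $\tau=\tau_{disc}^{\mathscr{F}}$, specifically the step in the paper asserting $\tau_{disc}^{\mathscr{F}}\subset\tau$. In the discrete topology every set is open, so $A^{o\mathscr{F}}=\{x\in A: F_x\subseteq A\}$, and therefore
\[
\tau_{disc}^{\mathscr{F}}=\{A\subseteq X: F_x\subseteq A \text{ for every } x\in A\}.
\]
This family is closed under arbitrary intersections, so every point has a smallest $\tau_{disc}^{\mathscr{F}}$-open neighbourhood.

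Now let $\tau$ be the cofinite topology on $\mathbb{N}$. It is $T_1$ but not $T_2$, since two nonempty open sets always meet. The intersection of all open sets containing $n$ is $\{n\}$, which is not open. Hence no rough family at all, trivial or not, satisfies $\tau=\tau_{disc}^{\mathscr{F}}$. The right-hand side of the corollary then holds vacuously while $\tau$ is not $T_2$. More generally, a $T_1$ topology closed under arbitrary intersections is discrete, so every $T_1$ non-$T_2$ space is a counterexample.

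Consequently the stated equivalence cannot be proved; only the implication you did prove holds in general. The converse can be rescued exactly for Alexandrov spaces (e.g.\ all finite spaces), and there it needs no equivalence classes of limits. Suppose $\tau$ is closed under arbitrary intersections and not $T_2$, and let $F_x$ be the smallest $\tau$-open set containing $x$. Then $A\in\tau$ if and only if $F_x\subseteq A$ for all $x\in A$, so $\tau=\tau_{disc}^{\mathscr{F}}$. Moreover $\mathscr{F}\neq\mathscr{F}^\#$, since otherwise every singleton would be open and $\tau$ would be discrete, hence $T_2$.
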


\begin{question}
     For any non-$T_2$ topology $\zeta$, and any $T_2$ topology $\tau~
     (\neq \tau_{disc}$) on $X$,  is it possible to construct a rough family $\mathscr{F}$ on $X$ such that $\tau^{\mathscr{F}}=\zeta$?
\end{question}

\begin{theorem}\label{T8}
 If $(X,\tau)$ is compact (resp. connected), then $(X,\tau^\mathscr{F})$ is compact (resp. connected). The converse may not be true.
\end{theorem}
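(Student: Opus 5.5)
The forward direction rests on one fact: $\tau^\mathscr{F}\subset\tau$. By Remark \ref{Rem2}(iii), every $\mathscr{F}$-open set is open. An arbitrary member of $\tau^\mathscr{F}$ is a union of $\mathscr{F}$-open sets, or at any rate is generated by them, so it is again $\tau$-open. Thus the identity map $\mathrm{id}:(X,\tau)\to(X,\tau^\mathscr{F})$ is continuous, and compactness and connectedness pass to continuous images. I will still write out the direct arguments, since they are short and avoid any question of how exactly $\tau^\mathscr{F}$ is generated.

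\emph{Compactness.} Take a cover $\{U_\alpha\}$ of $X$ by $\tau^\mathscr{F}$-open sets. Each $U_\alpha$ lies in $\tau$, so the family is a $\tau$-open cover. Compactness of $(X,\tau)$ gives a finite subcover, and that subcover is already a subfamily of the original $\tau^\mathscr{F}$-cover.

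\emph{Connectedness.} Suppose $X=U\cup V$ is a separation in $\tau^\mathscr{F}$, with $U,V$ disjoint, nonempty and $\tau^\mathscr{F}$-open. Then $U,V\in\tau$, so the same pair separates $(X,\tau)$, which is a contradiction.

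The only point needing care is the inclusion $\tau^\mathscr{F}\subset\tau$ when $\tau^\mathscr{F}$ is defined as the topology generated by the $\mathscr{F}$-open sets. Finite intersections of $\mathscr{F}$-open sets are $\mathscr{F}$-open by Remark \ref{Rem2}(ii). Arbitrary unions of $\tau$-open sets are $\tau$-open. Hence everything generated stays in $\tau$.

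For the failure of the converse I need explicit examples. For compactness, I take $X=\mathbb{R}$ with $\tau=\mathbb{R}_u$, which is not compact, and the rough family $\mathscr{F}=\{\{x,2|x|\}:x\in\mathbb{R}\}$ from Theorem \ref{L1}. That proof gives $\mathbb{R}_u^\mathscr{F}=\{(a,\infty):a\in\mathbb{R}\}$; I would add $\emptyset$ and $\mathbb{R}$ so that this is a genuine topology. If this is not compact, the simplest replacement is the family $F_x=\mathbb{R}$ for all $x$. Then $\tau^\mathscr{F}$ is indiscrete, as noted after Theorem \ref{Th5.4}, and the indiscrete topology is both compact and connected while $\mathbb{R}_u$ is not compact.

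For connectedness, I take $X$ with at least two points, $\tau=\tau_{disc}$ (disconnected), and $F_x=X$ for all $x$. This again gives the indiscrete topology, which is connected. The example after Theorem \ref{Th5.4} shows that $\tau_{disc}^\mathscr{F}$ is the fixed point topology $\tau_a$ for $F_x=\{a,x\}$. This yields a connected example with a non-trivial family, because every nonempty open set contains $a$, so no two disjoint nonempty open sets exist.

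The main obstacle is choosing an example for the compactness converse that avoids the trivial family. I would use the fixed point topology on an infinite discrete $X$. It is not compact, since the cover $\{\{a,x\}:x\in X\}$ has no finite subcover, so it is useless here. I would therefore rely on $F_x=X$, or on a finite-type family such as $F_x=X$ for a single point $x$ if required. I will also check that $\mathbb{R}_u^\mathscr{F}$ from Theorem \ref{L1} is not itself compact: the cover $\{(-n,\infty):n\in\mathbb{N}\}$ has no finite subcover. So the indiscrete choice is the safe one.
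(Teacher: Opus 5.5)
Your proof is correct, and the forward direction is exactly the paper's. Every $\mathscr{F}$-open set is $\tau$-open, so $\tau^\mathscr{F}\subset\tau$, and compactness and connectedness pass to the coarser topology. Your care over ``generated'' is harmless but unnecessary. If $x\in A_i$ and $F_x\subset U\subset A_i$ with $U\in\tau$, then $F_x\subset U\subset\bigcup_i A_i$, so the $\mathscr{F}$-open sets are already closed under arbitrary unions; intersecting witnesses handles finite intersections.

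You differ from the paper only in the counterexamples. The paper uses non-degenerate families:
\begin{itemize}
\item for compactness, discrete $\mathbb{N}$ with $F_n=\{k\in\mathbb{N}:k\ge n\}$, whose $\mathscr{F}$-open sets are $\emptyset$ and the tails, so every open cover must contain $\mathbb{N}$ itself;
\item for connectedness, $\mathbb{R}_l$ with $\{\{x,2|x|\}:x\in\mathbb{R}\}$.
\end{itemize}
You instead take $F_x=X$, giving the indiscrete topology, which settles both properties at once. You also use the fixed point topology $\tau_a=\tau_{disc}^{\mathscr{F}}$ for connectedness. Your examples need no computation and cannot go wrong. The paper's examples show the phenomenon for topologies that are not indiscrete.

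Your distrust of the $\{x,2|x|\}$ family is justified. Its $\mathscr{F}$-open sets are precisely the open sets invariant under $x\mapsto 2|x|$. These include non-rays such as $\bigcup_{n\ge 0}[2^n,\tfrac{11}{10}2^n)$ in $\mathbb{R}_l$, so the paper's description ``$[a,\infty)$'' is incomplete. The paper's connectedness conclusion still holds. An $\mathscr{F}$-open set containing $0$ contains $[0,\infty)$, and one containing some $x<0$ also contains $2|x|>0$, so no separation exists. Likewise, your cover $\{(-n,\infty):n\in\mathbb{N}\}$ shows $\mathbb{R}_u^\mathscr{F}$ is not compact, whatever its exact description.

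Your tentative single-point family also works and is a good middle ground. Take $F_{x_0}=X$ and $F_x=\{x\}$ for $x\neq x_0$. Then
\[
\tau^\mathscr{F}=\{X\}\cup\{U\in\tau:x_0\notin U\},
\]
which is compact and connected for every $\tau$, since any cover or separation involves an open set containing $x_0$, and that set must be $X$. This topology is generally far from indiscrete. In a final write-up, drop the abandoned attempts and state one clean example per property.
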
 

\begin{proof}
    The proof is straightforward, as every $\mathscr{F}$-open set is an open set. 

    For the subsequent parts, we shall first focus on the space $\mathbb{N}$ equipped with the discrete topology $\tau_{disc}$. Consider the rough family $$\mathscr{F}=\{F_n: n\in \mathbb{N}\}~\text{where}~F_n=\{k\in \mathbb{N}: k\ge n\}.$$
    Then any $\mathscr{F}$-open set is of the form $A_n=\{k\in \mathbb{N}: k\ge n\}.$ Hence, the topology $\tau_{disc}^\mathscr{F}$ on $\mathbb{N}$ becomes compact whereas $(\mathbb{N},\tau_{disc})$ is a non-compact space.\\

    For another part, in $\mathbb{R}_l$, we construct the rough families $\mathcal{G}$ as $\mathcal{G}=\left\{\{x,2|x|\}: x\in \mathbb{R}\right\}.$
    Then, the $\mathcal{G}$-open sets are of the form $[a,\infty),$ $a\in\mathbb{R},$ that induces the topology $\mathbb{R}^{\mathcal{G}}_l$. Clearly, the topology $\mathbb{R}^{\mathcal{G}}_l$ is connected since there do not exist two disjoint open sets, whereas $\mathbb{R}_l$ is not connected. 
\end{proof}

Now, a natural inquiry emerges: Can we establish a criterion for the compactness or connectedness of $(X,\tau^\mathscr{F})$ that is independent of the compactness or connectedness of the initial space $(X,\tau)$? The subsequent theorem provides an affirmative answer to this inquiry.

\begin{theorem}\label{Th5.3}
 Let $(X,\tau)$ be a topological space and let $\mathscr{F}$ be a rough family in $X$. Then the induced space $(X,\tau^\mathscr{F})$ is 
 \begin{itemize}
    \item[(i)] compact if and only if every family of $\mathscr{F}$-closed subsets in $(X,\tau)$ that has the finite intersection property (F.I.P.) has a nonempty intersection;

    \item[(ii)] connected if and only if for any subset $A$ of $X$ that is both $\mathscr{F}$-open and $\mathscr{F}$-closed in $(X,\tau)$ implies either $A=\emptyset$ or $A=X$.
\end{itemize}
\end{theorem}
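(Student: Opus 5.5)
The plan is to reduce both statements to the classical characterizations of compactness and connectedness, applied to the topological space $(X,\tau^\mathscr{F})$. The key step is to identify the open and closed sets of $\tau^\mathscr{F}$ with the $\mathscr{F}$-open and $\mathscr{F}$-closed subsets of $(X,\tau)$.

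First, I verify that the collection $\{A\subset X: A^{o\mathscr{F}}=A\}$ is itself a topology, so that $\tau^\mathscr{F}$ coincides with the family of $\mathscr{F}$-open sets and not merely with the topology they generate. By Remark \ref{Rem2}(i),(ii), $\emptyset$ and $X$ are $\mathscr{F}$-open, and finite intersections of $\mathscr{F}$-open sets are $\mathscr{F}$-open. For arbitrary unions, let $\{A_\alpha\}$ be $\mathscr{F}$-open and $x\in\bigcup A_\alpha$, say $x\in A_{\beta}$. Then there is $U\in\tau$ with $F_x\subset U\subset A_\beta\subset\bigcup A_\alpha$, so $x$ is a rough interior point of the union. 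Hence the open sets of $(X,\tau^\mathscr{F})$ are exactly the $\mathscr{F}$-open sets. By Theorem \ref{T6}, the closed sets of $(X,\tau^\mathscr{F})$, being the complements of these, are exactly the $\mathscr{F}$-closed subsets of $(X,\tau)$.

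For (i), I use the standard fact that a space is compact if and only if every family of closed sets with the F.I.P. has nonempty intersection. The proof goes through De Morgan: an open cover with no finite subcover yields, after taking complements, a closed family with the F.I.P. and empty intersection, and conversely. Under the identification above, the closed sets of $(X,\tau^\mathscr{F})$ are the $\mathscr{F}$-closed sets, and (i) follows.

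For (ii), $(X,\tau^\mathscr{F})$ is connected if and only if its only clopen sets are $\emptyset$ and $X$. By the same identification, a set is clopen in $\tau^\mathscr{F}$ exactly when it is both $\mathscr{F}$-open and $\mathscr{F}$-closed in $(X,\tau)$. This gives (ii).

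The only genuinely nontrivial point is the closure of $\mathscr{F}$-open sets under arbitrary unions. This holds because the rough interior condition is local at each point and monotone in the ambient set; once it is checked, the rest is routine translation.
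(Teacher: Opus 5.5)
Your proposal is correct and follows the paper's route: the paper applies the classical characterizations (closed families with the F.I.P.\ for compactness, clopen sets for connectedness) and uses Theorem~\ref{T6} to identify the closed sets of $\tau^{\mathscr{F}}$ with the $\mathscr{F}$-closed sets. You additionally check that the $\mathscr{F}$-open sets are closed under arbitrary unions, so $\tau^{\mathscr{F}}$ consists exactly of the $\mathscr{F}$-open sets; this is a detail the paper takes for granted in its definition of the rough topology.
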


\begin{proof}
     The proof of the theorem follows the classical approach, relying on Theorem~\ref{T6}.
\end{proof}

Determining whether two topological spaces are homeomorphic often presents significant challenges. We provide an alternative approach using the concepts of rough homeomorphism and rough topology.

\begin{theorem}
    Let $(X,\tau)$ and $(Y,\zeta)$ be $(\mathscr{F},\mathscr{L})$-homeomorphic. Then $(X,\tau^\mathscr{F})$ and $(Y,\zeta^\mathscr{L})$ are homeomorphic.
\end{theorem}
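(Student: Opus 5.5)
The plan is to check that the same bijection $f$ is a homeomorphism between $(X,\tau^\mathscr{F})$ and $(Y,\zeta^\mathscr{L})$. Let $f: X\to Y$ be the $(\mathscr{F},\mathscr{L})$-homeomorphism. By definition, $f$ is bijective, $f$ is $(\mathscr{F},\mathscr{L})$-continuous, and $f^{-1}$ is $(\mathscr{L},\mathscr{F})$-continuous. It suffices to show two things: $f^{-1}(W)\in\tau^\mathscr{F}$ for every $W\in\zeta^\mathscr{L}$, and symmetrically $f(O)\in\zeta^\mathscr{L}$ for every $O\in\tau^\mathscr{F}$.

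Let $\mathcal{O}_\mathscr{F}=\{A\subset X: A^{o\mathscr{F}}=A\}$ and $\mathcal{O}_\mathscr{L}=\{B\subset Y: B^{o\mathscr{L}}=B\}$ denote the collections of $\mathscr{F}$-open and $\mathscr{L}$-open sets. By the definition of the rough topology, $\tau^\mathscr{F}$ is the topology induced by $\mathcal{O}_\mathscr{F}$, and $\zeta^\mathscr{L}$ is the topology induced by $\mathcal{O}_\mathscr{L}$. By Remark \ref{Rem2}(i),(ii), each collection contains $\emptyset$ and the whole space and is closed under finite intersections. So each generated topology consists exactly of the arbitrary unions of members of the collection, i.e.\ the collection is a base. (If the collection is already closed under unions, the topology equals the collection itself. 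This does not matter for the argument.)

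The key step is a transfer of basic sets. Take $B\in\mathcal{O}_\mathscr{L}$. Rough continuity of $f$ gives $f^{-1}(B)\in\mathcal{O}_\mathscr{F}\subset\tau^\mathscr{F}$. Now take a general $W\in\zeta^\mathscr{L}$ and write $W=\bigcup_i B_i$ with $B_i\in\mathcal{O}_\mathscr{L}$. Since preimages commute with unions, $f^{-1}(W)=\bigcup_i f^{-1}(B_i)$, which is a union of members of $\tau^\mathscr{F}$ and hence lies in $\tau^\mathscr{F}$. Therefore $f:(X,\tau^\mathscr{F})\to(Y,\zeta^\mathscr{L})$ is continuous.

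For the inverse, apply the same argument to $f^{-1}$, which is $(\mathscr{L},\mathscr{F})$-continuous. Because $f$ is bijective, $(f^{-1})^{-1}(A)=f(A)$, so $f$ maps $\mathscr{F}$-open sets to $\mathscr{L}$-open sets. Since images under a bijection commute with unions, $f^{-1}$ is continuous from $(Y,\zeta^\mathscr{L})$ to $(X,\tau^\mathscr{F})$. Hence $f$ is a homeomorphism.

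The only delicate point is being precise about what "induces a topology" means in the definition of $\tau^\mathscr{F}$. We have to make sure every element of $\tau^\mathscr{F}$ is a union of $\mathscr{F}$-open sets, so that continuity can be checked on the generating family. Closure under finite intersections from Remark \ref{Rem2}(ii) settles this: the generated topology consists of unions of finite intersections, and those finite intersections are again $\mathscr{F}$-open.
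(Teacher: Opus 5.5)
Your proof is correct and follows the paper's route. Both arguments show that the same bijection $f$ is a homeomorphism: $\mathscr{L}$-open sets are pulled back using the $(\mathscr{F},\mathscr{L})$-continuity of $f$, and $\mathscr{F}$-open sets are pulled back using the $(\mathscr{L},\mathscr{F})$-continuity of $f^{-1}$.

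The paper skips your base-reduction step and treats every member of $\zeta^{\mathscr{L}}$ directly as an $\mathscr{L}$-open set. This is legitimate because the $\mathscr{L}$-open sets are already closed under arbitrary unions: if $y\in B_j$ and $L_y\subset U\subset B_j$ with $U$ open, then $U\subset\bigcup_i B_i$. So your extra step is redundant but harmless.
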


\begin{proof}
    By assumption, there is a $(\mathscr{F},\mathscr{L})$-homeomorphism $f: (X,\tau)\to (Y,\zeta)$. We assert that $f: (X,\tau^\mathscr{F})\to (Y,\zeta^\mathscr{L})$ is a homeomorphism. To this aim, let $V\in \zeta^\mathscr{L}$. Thus, $V$ be a $\mathscr{L}$-open set in $(Y,\zeta)$. Due to the characteristics of $f$, $f^{-1}(V)$ is $\mathscr{F}$-open set in $(X,\tau)$ and thus $f^{-1}(V)\in \tau^\mathscr{F}$. Hence, $f: (X,\tau^\mathscr{F})\to (Y,\zeta^\mathscr{L})$ is continuous. Similarly, $f^{-1}: (Y,\zeta^\mathscr{L})\to (X,\tau^\mathscr{F})$ is continuous as well. Thus, we conclude that the spaces $(X,\tau^\mathscr{F})$ and $(Y,\zeta^\mathscr{L})$ are indeed homeomorphic.
\end{proof}

\section{Rough compactness \& Rough connectedness with roughness $\mathscr{F}$}\label{Sec5}

One of the significant research directions of this study is the introduction of the concept of compactness and connectedness concerning a rough family $\mathscr{F}$. In this section, we propose the notion of rough compactness and rough connectedness with roughness $\mathscr{F}$ and examine its relationship with the conventional notion of compactness and connectedness. Recall that a topological space is compact if every open cover of $X$ has a finite subcover and is called connected if it cannot be partitioned into two disjoint, non-empty open sets.  Analogously, we define rough compactness and rough connectedness with roughness $\mathscr{F}$ as follows:
\begin{definition}
    Let $\mathscr{F}$ be a rough family in a topological space $(X,\tau)$. Then $(X,\tau)$ is said to be 
    
    \begin{itemize}
        \item rough compact with roughness $\mathscr{F}$ or $\mathscr{F}$-compact provided for every $\mathscr{F}$-open cover has a finite  $\mathscr{F}$-open subcover. A subset $A$ of $X$ is said to be $\mathscr{F}$-compact if for every family $\{U_\alpha\}_{\alpha\in J}$ of $\mathscr{F}$-open subsets of $X$  there exists a finite subset $\{\alpha_1,\alpha_2,\cdot\cdot\cdot,\alpha_k\}\subset J$ so that $A\subset\cup^{k}_{i=1}U_{\alpha_i}.$
  
    \item rough connected with roughness $\mathscr{F}$ or $\mathscr{F}$-connected provided that $X$ cannot be expressed as a union of two disjoint non-void $\mathscr{F}$-open sets.
    \end{itemize}\end{definition}

\begin{remark}
\begin{itemize}
    \item[(i)] Since every $\mathscr{F}$-open set is open for any rough family $\mathscr{F}$, it follows that every compact (resp. connected) topological space is also $\mathscr{F}$-compact (resp. $\mathscr{F}$-connected) for any rough family $\mathscr{F}$. 
    However, the reverse implications are not assured, as substantiated by Theorem \ref{T8}.

    \item[(ii)] A rough closed subset of a rough compact space is also rough compact. Nonetheless, it is crucial to recognize that not all closed subsets of a rough compact space are necessarily rough compact. Consider Example \ref{Ex5}, where $[2,4]$ is $\mathscr{F}$-compact. Now, the subset $[2,3)$ is closed; however, it does not meet the criteria for $\mathscr{F}$-compactness, as the collection $\left\{\left[0,3-\frac{1}{n}\right): n\in \mathbb{N}\right\}$ forms a $\mathscr{F}$-open cover for $[2,3)$ but lacks a finite subcover.
\end{itemize}
   
\end{remark}

It is well established that continuity maintains compactness (or connectedness). However, the subsequent example illustrates that rough compactness is not always preserved under a continuous mapping.

\begin{example}
    Let us construct the map $f: \mathbb{R}_l\to \mathbb{R}_u$ defined by $$f(x)=\frac{1}{1+|x|}, x\in \mathbb{R}.$$
    Clearly, the map $f$ is continuous. Next, consider the rough families $\mathscr{F}$ and $\mathscr{L}$ on $\mathbb{R}_l$ and $\mathbb{R}_u$ respectively, defined as follows: 
    $$\mathscr{F}=\mathscr{L}=\left\{\left\{x,\frac{x}{2}\right\}:x\in \mathbb{R}\right\}.$$ Consequently, the set $A=(0,4]$ becomes a $\mathscr{F}$-compact set in $\mathbb{R}_l$. Any $\mathscr{L}$-open set in $\mathbb{R}_u$ is one of the following form 
    $$(0,b)~\text{or}~(-a,0)~\text{or}~(-a,b)~\text{where}~a,b\in \mathbb{R}~\text{with}~a,b>0.$$
    Hence, the set $f(A)=\left[\frac{1}{5},1\right)$ is not $\mathscr{F}$-compact as the open cover $\left\{\left(0,1-\frac{1}{n}\right):n\in\mathbb{N}\right\}$  does not possess a finite subcover for $\left[\frac{1}{5},1\right)$.
\end{example}

\begin{theorem}\label{Thm6.1}
     Let $f: X\rightarrow Y$ be $(\mathscr{F},\mathscr{L})$-continuous map. Then \begin{itemize}
         \item[(i)] if $X$ is $\mathscr{F}$-compact, then $f(X)$ is $\mathscr{L}$-compact;

         \item[(ii)] if $X$ is $\mathscr{F}$-connected, then $f(X)$ is $\mathscr{L}$-connected.
     \end{itemize}
\end{theorem}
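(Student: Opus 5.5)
We will mimic the classical proofs that continuous images of compact (connected) spaces are compact (connected), replacing open sets by $\mathscr{L}$-open and $\mathscr{F}$-open sets throughout. The only property of $f$ we use is that preimages of $\mathscr{L}$-open sets are $\mathscr{F}$-open. We also use the set identities $f^{-1}(\bigcup V_\alpha)=\bigcup f^{-1}(V_\alpha)$ and $f^{-1}(V_1\cap V_2)=f^{-1}(V_1)\cap f^{-1}(V_2)$. We read "$f(X)$ is $\mathscr{L}$-compact" in the subset sense of the definition: every family of $\mathscr{L}$-open subsets of $Y$ covering $f(X)$ has a finite subfamily covering $f(X)$.

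For (i), take a family $\{V_\alpha\}_{\alpha\in J}$ of $\mathscr{L}$-open subsets of $Y$ with $f(X)\subset\bigcup_{\alpha}V_\alpha$.
\begin{itemize}
\item By $(\mathscr{F},\mathscr{L})$-continuity, each $U_\alpha=f^{-1}(V_\alpha)$ is $\mathscr{F}$-open in $X$.
\item These sets cover $X$, since every $x$ has $f(x)\in V_\alpha$ for some $\alpha$.
\item By $\mathscr{F}$-compactness of $X$ there are $\alpha_1,\dots,\alpha_k$ with $X=\bigcup_{i=1}^k U_{\alpha_i}$.
\item Applying $f$ gives $f(X)\subset\bigcup_{i=1}^k f(f^{-1}(V_{\alpha_i}))\subset\bigcup_{i=1}^k V_{\alpha_i}$, which is the required finite subcover.
\end{itemize}

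For (ii), argue by contradiction. We interpret $\mathscr{L}$-connectedness of $f(X)$ as: $f(X)$ cannot be split by two $\mathscr{L}$-open sets $V_1,V_2$ of $Y$ with $f(X)\subset V_1\cup V_2$, $V_1\cap V_2\cap f(X)=\emptyset$, and both $V_i\cap f(X)\neq\emptyset$. Suppose such a splitting exists.
\begin{itemize}
\item The sets $U_i=f^{-1}(V_i)$ are $\mathscr{F}$-open in $X$.
\item They satisfy $U_1\cup U_2=X$.
\item They are disjoint, since $f(U_1\cap U_2)\subset V_1\cap V_2\cap f(X)=\emptyset$.
\item Each is nonempty, because $V_i$ meets $f(X)$.
\end{itemize}
This expresses $X$ as a union of two disjoint non-void $\mathscr{F}$-open sets, contradicting $\mathscr{F}$-connectedness of $X$.

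The main obstacle is definitional rather than technical. The paper defines $\mathscr{F}$-connectedness only for the whole space, so the meaning of "$f(X)$ is $\mathscr{L}$-connected" has to be fixed. The relative formulation above is the natural one. The tempting alternative, intersecting $\mathscr{L}$-open sets with $f(X)$ to form a subspace, is not available, because a subspace rough structure has not been defined, and we avoid it deliberately. With the relative formulation, both arguments are purely set-theoretic and do not use Theorem \ref{T6} or any property of the rough families beyond the definition of rough continuity.
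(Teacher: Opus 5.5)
Your proof is correct and follows the paper's route: part (i) is exactly the paper's pull-back-of-covers argument. For (ii), which the paper dismisses as ``structurally identical'' without details, you correctly supply the analogous pull-back of a separation, under a relative reading of $\mathscr{L}$-connectedness of $f(X)$ that the paper never defines for subsets. That reading coincides with connectedness of $f(X)$ as a subspace of the rough topology $(Y,\zeta^{\mathscr{L}})$, since the $\mathscr{L}$-open sets themselves form this topology, so the subspace formulation you set aside is available and agrees with yours.
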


\begin{proof} The proof for $\mathscr{L}$-compactness and $\mathscr{L}$-connectedness are structurally identical; we explain only $\mathscr{L}$-compactness here.

    Let $\{U_\alpha\}_{\alpha\in J}$ be a family of $\mathscr{L}$-open subsets in $Y$ which covers $f(X)$. Then due to $(\mathscr{F},\mathscr{L})$-continuity of $f$, $f^{-1}(U_\alpha)$ is $\mathscr{F}$-open in $X$ for all $\alpha\in J$. Hence, there is a finite subset $\{\alpha_1,\alpha_2,\cdot\cdot\cdot,\alpha_k\}\subset J$ such that $X=\bigcup^{k}_{i=1}f^{-1}(U_{\alpha_i})$ and consequently, $f(X)\subset \bigcup^{k}_{i=1}U_{\alpha_i}$.
\end{proof}

\begin{corollary}
     Let $(X,\tau)$ and $(Y,\zeta)$ be $(\mathscr{F},\mathscr{L})$-homeomorphic. Then $(X,\tau)$ is $\mathscr{F}$-compact (or $\mathscr{F}$-connected) if and only if $(Y,\zeta)$ $\mathscr{L}$-compact (or $\mathscr{L}$-connected).
\end{corollary}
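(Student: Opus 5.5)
The plan is to apply Theorem \ref{Thm6.1} in both directions, once to $f$ and once to $f^{-1}$. Let $f:(X,\tau)\to(Y,\zeta)$ be an $(\mathscr{F},\mathscr{L})$-homeomorphism. By definition, $f$ is a bijective $(\mathscr{F},\mathscr{L})$-continuous map, and $f^{-1}:Y\to X$ is an $(\mathscr{L},\mathscr{F})$-continuous map.

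For the forward direction, suppose $X$ is $\mathscr{F}$-compact (resp. $\mathscr{F}$-connected). Theorem \ref{Thm6.1} applied to $f$ shows that $f(X)$ is $\mathscr{L}$-compact (resp. $\mathscr{L}$-connected). Since $f$ is surjective, $f(X)=Y$.

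Before concluding, I will check that ``$Y$ is $\mathscr{L}$-compact as a subset'' agrees with ``$Y$ is $\mathscr{L}$-compact as a space''. Every family of $\mathscr{L}$-open sets covering $Y$ is an $\mathscr{L}$-open cover. A finite subfamily covering $Y$ is then itself a finite $\mathscr{L}$-open subcover. So the two notions coincide when the subset is the whole space.

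The analogous check for connectedness uses the proof of Theorem \ref{Thm6.1}(ii). Given a decomposition $Y=U\cup V$ into disjoint nonempty $\mathscr{L}$-open sets, the preimages $f^{-1}(U)$ and $f^{-1}(V)$ are disjoint $\mathscr{F}$-open sets covering $X$. They are nonempty because $f$ is surjective. This contradicts the $\mathscr{F}$-connectedness of $X$, so $Y$ is $\mathscr{L}$-connected.

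For the reverse direction, I run the same argument with the roles of the spaces exchanged. Here $f^{-1}:(Y,\zeta)\to(X,\tau)$ is $(\mathscr{L},\mathscr{F})$-continuous and surjective onto $X$. Theorem \ref{Thm6.1} with the pair $(\mathscr{L},\mathscr{F})$ then gives that $X$ is $\mathscr{F}$-compact (resp. $\mathscr{F}$-connected) whenever $Y$ is $\mathscr{L}$-compact (resp. $\mathscr{L}$-connected).

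I expect no real obstacle here. The only point needing care is the compatibility of the subset notion with the space notion, and the nonemptiness of the preimages, which uses surjectivity. Both are the checks described above.
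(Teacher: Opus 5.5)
Your proposal is correct and matches the paper's intended argument: the paper states the corollary without proof right after Theorem \ref{Thm6.1}, and it follows by applying that theorem to $f$ and to $f^{-1}$, exactly as you do. Your extra checks fill in details the paper leaves implicit: that subset $\mathscr{L}$-compactness of $f(X)=Y$ coincides with the space notion, and the direct preimage argument for connectedness, which uses surjectivity and avoids the undefined subset notion of rough connectedness.
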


It is natural to seek a characterization of rough compactness of some space through the cluster point notion of a net. To this end, we provide one answer, beginning with the following definition.

\begin{definition}
    Let $S: D\to X$ be a net. Then $x\in X$ is said to be rough cluster point with roughness $\mathscr{F}$ or $\mathscr{F}$-cluster point of $S$ provided for any open set $U$ in $X$ with $F_x\subset U$ and for any $\alpha\in D$, there is $\beta\in D$ with $\beta\geq \alpha$ and $S_\beta\in U$.
    \end{definition}

\begin{theorem}
 If every net in $X$ has a $\mathscr{F}$-cluster point, then $(X,\tau)$ is $\mathscr{F}$-compact.
\end{theorem}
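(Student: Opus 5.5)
We argue by contraposition, following the classical net proof that cluster points give compactness. Suppose $(X,\tau)$ is not $\mathscr{F}$-compact. Then there is a cover $\{U_\alpha\}_{\alpha\in J}$ of $X$ by $\mathscr{F}$-open sets with no finite subcover. Let $D$ be the collection of finite subsets of $J$, directed by inclusion. For each $\sigma\in D$ the set $X\setminus\bigcup_{\alpha\in\sigma}U_\alpha$ is nonempty, so by choice we pick a point $S_\sigma$ in it. This gives a net $S:D\to X$.

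By hypothesis $S$ has an $\mathscr{F}$-cluster point $x$. Since the $U_\alpha$ cover $X$, there is $\alpha_0\in J$ with $x\in U_{\alpha_0}$. Because $U_{\alpha_0}$ is $\mathscr{F}$-open, we have $U_{\alpha_0}^{o\mathscr{F}}=U_{\alpha_0}$. Hence $x$ is a rough interior point of $U_{\alpha_0}$: there is $V\in\tau$ with $F_x\subset V\subset U_{\alpha_0}$. Alternatively, $U_{\alpha_0}$ is itself open by Remark \ref{Rem2}(iii), and the inclusion $F_x\subset U_{\alpha_0}$ already follows from the existence of such a $V$. So we may take $U=V$, or simply $U=U_{\alpha_0}$, as an open set containing $F_x$.

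Now apply the cluster-point condition with $U$ and the index $\{\alpha_0\}\in D$. It yields some $\sigma\supseteq\{\alpha_0\}$ with $S_\sigma\in U\subset U_{\alpha_0}$. But $S_\sigma\notin\bigcup_{\alpha\in\sigma}U_\alpha\supseteq U_{\alpha_0}$ by construction, which is a contradiction. Hence every $\mathscr{F}$-open cover has a finite subcover, and the subcover consists of $\mathscr{F}$-open sets automatically. So $(X,\tau)$ is $\mathscr{F}$-compact.

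The only delicate step is producing an open set containing all of $F_x$, not merely $x$, inside a cover member. This is exactly where $\mathscr{F}$-openness of the cover is used, as opposed to plain openness. The cluster-point definition quantifies over open $U\supseteq F_x$, and the rough interior condition supplies precisely such a $U$. Everything else is the standard net argument.
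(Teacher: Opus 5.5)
Your proof is correct. It is the open-cover dual of the paper's proof, and it is more direct because it does not depend on the paper's other compactness results.

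The paper starts from a family $\mathscr{C}$ of $\mathscr{F}$-closed sets with the finite intersection property. It indexes the net by finite intersections of members of $\mathscr{C}$, ordered by reverse inclusion. It then uses Theorem~\ref{T6}: if the cluster point $z$ missed some $A\in\mathscr{C}$, then $X\setminus A$ would be an $\mathscr{F}$-open (hence open) set containing $F_z$, giving the same kind of contradiction you obtain. From this it concludes, via the finite-intersection characterization in Theorem~\ref{Th5.3}, that $(X,\tau^{\mathscr{F}})$ is compact. That conclusion still has to be turned into $\mathscr{F}$-compactness of $(X,\tau)$ using Theorem~\ref{Thm6.2}, which is only stated afterwards; the paper's last line actually says ``$(X,\tau)$ becomes compact.''

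Your version indexes the net by finite subsets of the cover's index set and reads $\mathscr{F}$-compactness straight off the definition. It needs neither the complement duality of Theorem~\ref{T6} nor the identification of $\tau^{\mathscr{F}}$ with the family of $\mathscr{F}$-open sets. The paper's route, in return, fits its closed-set/F.I.P. framework.

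In both proofs the key step is the one you single out. $\mathscr{F}$-openness supplies an open set containing all of $F_x$, which is exactly the kind of set the definition of an $\mathscr{F}$-cluster point quantifies over.
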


\begin{proof}
Let $\mathscr{C}$ be a family of $\mathscr{F}$-closed sets in $(X,\tau)$ having \textbf{F.I.P.} and $\mathscr{D}$ be the collection of all finite intersections of members of $\mathscr{C}$. Consequently, $\mathscr{D}$ forms a directed set w.r.t. "$\geq$" where $A\geq B$ if and only if $A\subset B$. Consider the net $S: \mathscr{D}\to X$ by $S(A)= \text{an element of } A$. By assumption, $S$ has a  $\mathscr{F}$-cluster point in $X$ say $z$. Our claim is $z\in \bigcap \mathscr{C}$. If not, then $z\in X\setminus A$ for some $A\in \mathscr{C}\subset \mathscr{D}$. Now thanks to Theorem \ref{T6}, $X\setminus A$ is $\mathscr{F}$-open and thus $F_z\subset X\setminus A$. Subsequently, there exists $B\in \mathscr{D}$ with $B\geq A$ and $S(B)\in X\setminus A$ contradicts the fact that $S(B)\in B\subset A$. Hence, $\bigcap \mathscr{C}$ is a non-void set and $(X,\tau)$ becomes compact according to Theorem \ref{Th5.3}.
\end{proof}

Our next goal is to identify the necessary and sufficient conditions for rough compactness (resp. rough connectedness), as detailed in the following theorem:

\begin{theorem}\label{Thm6.2}
    For any rough family $\mathscr{F}$, a topological space $(X,\tau)$ is $\mathscr{F}$-compact (resp. $\mathscr{F}$-connected) if and only if $(X,\tau^\mathscr{F})$ is compact (resp. connected).
\end{theorem}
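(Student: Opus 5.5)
The plan is to identify the $\mathscr{F}$-open sets of $(X,\tau)$ with the open sets of the rough topology $\tau^\mathscr{F}$. Both equivalences then become tautological translations of the definitions. The first task is to check that the collection $\mathcal{O}_\mathscr{F}=\{A\subset X: A^{o\mathscr{F}}=A\}$ is itself a topology, so that $\tau^\mathscr{F}=\mathcal{O}_\mathscr{F}$ exactly and not merely the topology generated by it.

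Remark~\ref{Rem2}(i),(ii) already gives that $\emptyset, X\in\mathcal{O}_\mathscr{F}$ and that $\mathcal{O}_\mathscr{F}$ is closed under finite intersections. For arbitrary unions, take $A_\alpha\in\mathcal{O}_\mathscr{F}$ and $x\in A=\bigcup_\alpha A_\alpha$. Choose $\alpha$ with $x\in A_\alpha$. There is then $U\in\tau$ with $F_x\subset U\subset A_\alpha\subset A$, so $x\in A^{o\mathscr{F}}$. Combined with $A^{o\mathscr{F}}\subset A$ from Theorem~\ref{T1}, this gives $A^{o\mathscr{F}}=A$. Hence the open sets of $(X,\tau^\mathscr{F})$ are precisely the $\mathscr{F}$-open sets of $(X,\tau)$. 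Likewise, by Theorem~\ref{T6}, the closed sets of $\tau^\mathscr{F}$ are precisely the $\mathscr{F}$-closed sets.

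For compactness, an $\mathscr{F}$-open cover of $X$ in $(X,\tau)$ is literally an open cover of $(X,\tau^\mathscr{F})$, and finite subcovers correspond verbatim. So $\mathscr{F}$-compactness of $(X,\tau)$ is equivalent to compactness of $(X,\tau^\mathscr{F})$. Alternatively, one could route through the F.I.P. criterion of Theorem~\ref{Th5.3}(i). For connectedness, a partition of $X$ into two disjoint nonempty $\mathscr{F}$-open sets is the same as a separation of $(X,\tau^\mathscr{F})$ by open sets, so the two notions coincide. Theorem~\ref{Th5.3}(ii) gives the same conclusion via clopen sets.

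The only point needing care is the union-closure step. If $\tau^\mathscr{F}$ were only the topology generated by $\mathcal{O}_\mathscr{F}$, new open sets could appear and the compactness direction from $\mathscr{F}$-compact to compact could fail. The direct argument above settles this, so there is no genuine obstacle.
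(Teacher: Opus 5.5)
Your proposal is correct and follows the same route as the paper, which simply reads an open cover of $(X,\tau^\mathscr{F})$ as a cover by $\mathscr{F}$-open sets (and conversely). Your explicit check that $\{A\subset X: A^{o\mathscr{F}}=A\}$ contains $\emptyset, X$ and is closed under finite intersections and arbitrary unions supplies the identification $\tau^\mathscr{F}=\{A: A^{o\mathscr{F}}=A\}$, which the paper uses without comment. One small correction to your closing aside: if $\tau^\mathscr{F}$ were only generated by the $\mathscr{F}$-open sets, compactness would still transfer, since those sets are closed under finite intersections and would form a base; it is connectedness that could then genuinely differ.
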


\begin{proof} Let $(X,\tau)$ be $\mathscr{F}$-compact but $(X,\tau^\mathscr{F})$ fails to be compact. Then, there exists an open cover $\{U_\alpha\}_{\alpha\in J}$ in $(X,\tau^\mathscr{F})$ that has no finite subcover for $X$. Hence, we obtain a family $\{U_\alpha\}_{\alpha\in J}$ of $\mathscr{F}$-open sets in $(X,\tau)$ that has no finite subcover for $X$, contradicts the assumption. So, the space $(X,\tau^\mathscr{F})$ must be compact. The converse part is quite similar, hence omitted.
\end{proof}

In a $T_2$ topological space, a compact subset is inherently closed. However, this does not straightforwardly extend to the concept of rough compactness. Specifically, a $\mathscr{F}$-compact subset in a $T_2$ topological space might not be $\mathscr{F}$-closed, when considering the associated roughness $\mathscr{F}$. To illustrate, consider Example \ref{Ex5}, where the set $[2,4]$ is a $\mathscr{F}$-compact set, yet it is not a $\mathscr{F}$-closed because $\mathbb{R}\setminus[2,4]=(-\infty, 2)\cup (4,\infty)$ fails to be a $\mathscr{F}$-open set.\\

For a rough family $\mathscr{F}$ in a topological space $(X,\tau)$, $X$ is defined as a $\mathscr{F}$-$T_{2}$-space if any two distinct points in $X$ can be separated by two disjoint $\mathscr{F}$-open sets. Consequently, every $\mathscr{F}$-$T_{2}$-space inherently qualifies as a $T_2$-space; however, the reverse implication does not necessarily hold. This is exemplified in Example \ref{Ex5}, where points $2$ and $4$ cannot be separated by two disjoint $\mathscr{F}$-open sets.

\begin{theorem}
 In a $\mathscr{F}$-$T_2$-topological space, a $\mathscr{F}$-compact subset is always $\mathscr{F}$-closed. Hence, closed as well.
\end{theorem}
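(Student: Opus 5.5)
We mimic the classical argument that compact subsets of Hausdorff spaces are closed. Let $K$ be an $\mathscr{F}$-compact subset of an $\mathscr{F}$-$T_2$-space $(X,\tau)$. By Theorem \ref{T6}, it suffices to show that $X\setminus K$ is $\mathscr{F}$-open, i.e., $(X\setminus K)^{o\mathscr{F}}=X\setminus K$. Once this is done, $K$ is $\mathscr{F}$-closed, and then by Remark \ref{Rem2}(iii) (equivalently Theorem \ref{T1}), $K$ is closed.

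Fix $x\in X\setminus K$. For each $k\in K$ we have $k\neq x$, so the $\mathscr{F}$-$T_2$ hypothesis gives disjoint $\mathscr{F}$-open sets $U_k\ni x$ and $V_k\ni k$. The family $\{V_k\}_{k\in K}$ is a family of $\mathscr{F}$-open subsets of $X$ covering $K$. By $\mathscr{F}$-compactness of $K$ there are finitely many points $k_1,\dots,k_n$ with $K\subset \bigcup_{i=1}^n V_{k_i}$. Put $U=\bigcap_{i=1}^n U_{k_i}$. By Remark \ref{Rem2}(ii), extended by induction, $U$ is $\mathscr{F}$-open. Moreover $x\in U$, and $U$ is disjoint from $\bigcup_i V_{k_i}\supset K$, so $U\subset X\setminus K$. (If $K=\emptyset$, then $X\setminus K=X$ is $\mathscr{F}$-open by Remark \ref{Rem2}(i).)

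It remains to convert this into $x$ being a rough interior point of $X\setminus K$. Since $U$ is $\mathscr{F}$-open and $x\in U$, we have $x\in U^{o\mathscr{F}}$. Hence there is $W\in\tau$ with $F_x\subset W\subset U\subset X\setminus K$, so $x\in (X\setminus K)^{o\mathscr{F}}$. Because $x$ was arbitrary, $X\setminus K\subset (X\setminus K)^{o\mathscr{F}}$, and the reverse inclusion holds by Theorem \ref{T1}.

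The only point needing care is the closure of $\mathscr{F}$-open sets under finite intersections. Remark \ref{Rem2}(ii) asserts it, and it can be checked directly: if $F_x\subset W_1\subset A$ and $F_x\subset W_2\subset B$, then $F_x\subset W_1\cap W_2\subset A\cap B$ with $W_1\cap W_2\in\tau$. This is the step I would verify explicitly; everything else is the classical argument verbatim.
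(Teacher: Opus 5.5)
Your proof is correct and follows the same argument as the paper. You separate $x\notin K$ from each point of $K$ by disjoint $\mathscr{F}$-open sets, extract a finite subcover of $K$, intersect the corresponding $\mathscr{F}$-open neighbourhoods of $x$, and conclude via Theorem \ref{T6}. Your explicit check that finite intersections of $\mathscr{F}$-open sets are $\mathscr{F}$-open, and your handling of $K=\emptyset$, simply spell out steps the paper leaves implicit.
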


\begin{proof}
Let $A$ be a $\mathscr{F}$-compact subset of a $\mathscr{F}$-$T_2$-topological space $(X,\tau)$ and let $x\in X\setminus A$. Then for any  $y\in A$, there are two disjoint $\mathscr{F}$-open sets $U_y, V_y$ in $X$ such that $x\in U_y$ and $y\in V_y$. Now consider the collection $\{V_y: y\in A\}$. This will form a $\mathscr{F}$-open cover of $A$ and thus has a finite subcover $\{V_{y_i}: i=1,2,\cdot\cdot\cdot, k\}$ i.e., $A\subset \bigcup^k_{i=1} V_{y_i}$. Let $U_{y_i},~ i=1,2,\cdot\cdot\cdot, k$ be the corresponding $\mathscr{F}$-open sets containing $x$. Let $U=\bigcap^k_{i=1} U_{y_i}$. Then $U$ is also a $\mathscr{F}$-open set and $x\in F_x\subset U\subset X\setminus A$. Hence, $X\setminus A$ is $\mathscr{F}$-open or equivalently, $A$ is $\mathscr{F}$-closed and thus is closed also.
\end{proof}

The paper concludes by examining the concepts of $\mathscr{F}$‑compactness and $\mathscr{F}$‑connectedness, analyzed across various topological contexts and within distinct rough families:
\begin{theorem}\label{Th6.5} Let $\mathscr{F}$ be a rough family in a topological space $(X,\tau)$. Then \begin{itemize}
    \item[(i)] $(X,\tau)$ is $\mathscr{F}$-compact (resp. $\mathscr{F}$-connected) implies that $(X,\zeta)$ is also $\mathscr{F}$-compact (resp. $\mathscr{F}$-connected) for any topology $\zeta$ on $X$ with $\zeta\subset \tau$.
    
    \item[(ii)] if $(X,\tau)$ is $\mathscr{F}$-compact (resp. $\mathscr{F}$-connected), then it follows that $(X,\tau)$ is also $\mathscr{R}$-compact (resp. $\mathscr{R}$-connected ) for any rough family $\mathscr{R}$ in $X$ with $\mathscr{F}\leq \mathscr{R}$.
\end{itemize}
\end{theorem}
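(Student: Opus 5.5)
The plan is to reduce both parts to comparisons between the rough topologies, using Theorem \ref{Thm6.2} and Theorem \ref{Th5.4}. Throughout, I use that a coarser topology inherits compactness and connectedness from a finer one: an open cover in the coarser topology is also an open cover in the finer one, and a separation in the coarser topology is also a separation in the finer one.

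For (ii), let $\mathscr{F}\leq\mathscr{R}$. By Theorem \ref{Th5.4}(ii), $\tau^\mathscr{R}\subset\tau^\mathscr{F}$. Since $(X,\tau)$ is $\mathscr{F}$-compact (resp. $\mathscr{F}$-connected), Theorem \ref{Thm6.2} says $(X,\tau^\mathscr{F})$ is compact (resp. connected). The coarser space $(X,\tau^\mathscr{R})$ is therefore compact (resp. connected). Applying Theorem \ref{Thm6.2} again gives that $(X,\tau)$ is $\mathscr{R}$-compact (resp. $\mathscr{R}$-connected). The argument can also be made directly: by Theorem \ref{Th3.7}(ii), every $\mathscr{R}$-open set is $\mathscr{F}$-open. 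So an $\mathscr{R}$-open cover is an $\mathscr{F}$-open cover, and it has a finite subcover whose members are still $\mathscr{R}$-open. Likewise, a splitting of $X$ into two disjoint nonempty $\mathscr{R}$-open sets is such a splitting into $\mathscr{F}$-open sets.

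For (i), let $\zeta\subset\tau$. Theorem \ref{Th5.4}(i), applied with the roles of the two topologies exchanged, gives $\zeta^\mathscr{F}\subset\tau^\mathscr{F}$. To verify this directly: if $A$ is $\mathscr{F}$-open in $(X,\zeta)$, then each $x\in A$ has some $U\in\zeta\subset\tau$ with $F_x\subset U\subset A$, so $A$ is $\mathscr{F}$-open in $(X,\tau)$. This is Theorem \ref{Th3.7}(i). Hence every $\mathscr{F}$-open cover of $X$ in $(X,\zeta)$ is an $\mathscr{F}$-open cover in $(X,\tau)$. It has a finite subcover, and that subcover consists of $\mathscr{F}$-open sets of $(X,\zeta)$ because it is drawn from the original cover. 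Similarly, a separation of $X$ by $\mathscr{F}$-open sets of $(X,\zeta)$ would be a separation by $\mathscr{F}$-open sets of $(X,\tau)$, which is impossible. Equivalently, one can pass through Theorem \ref{Thm6.2} and the inclusion $\zeta^\mathscr{F}\subset\tau^\mathscr{F}$.

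I do not expect a real obstacle. The one point that needs care is the direction of the inclusions: the property must pass from the finer rough topology ($\tau^\mathscr{F}$ in each part) to the coarser one ($\zeta^\mathscr{F}$ or $\tau^\mathscr{R}$). For this to work, the finite subcover must be taken from the given cover itself, so that it automatically consists of sets open in the coarser structure.
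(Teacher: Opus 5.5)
Your proof is correct and is essentially the paper's argument: the paper also uses Theorem \ref{Th3.7} to see that every $\mathscr{F}$-open set of $(X,\zeta)$ (resp.\ every $\mathscr{R}$-open set) is $\mathscr{F}$-open in $(X,\tau)$, and concludes that a cover with no finite subcover would contradict the $\mathscr{F}$-compactness of $(X,\tau)$. You argue directly rather than by contradiction and write out the connectedness case explicitly, and your alternative route through Theorems \ref{Th5.4} and \ref{Thm6.2} is only a repackaging of the same inclusion of rough topologies.
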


\begin{proof}\begin{itemize}
    \item[(i)] Assume that the space $(X,\zeta)$ is not $\mathscr{F}$-compact. Then there exists a collection $\mathcal{O}$ of $\mathscr{F}$-open sets in $(X,\zeta)$ that lacks a finite subcover for $X$. Theorem \ref{Th3.7} asserts that each member of $\mathcal{O}$ is also a $\mathscr{F}$-open set in $(X,\tau)$, which implies that $(X,\tau)$ is not $\mathscr{F}$-compact, leading to a contradiction.

    \item[(ii)] The proof of (ii) is analogous to the proof of (i).
\end{itemize}
    
\end{proof}


\begin{thebibliography}{99}

	\bibitem{Aytar2008(2)} Aytar, S., 2008. The rough limit set and the core of a real sequence. Numer. Funct. Anal. Optim. 29(3-4), 283-290.
	
	\bibitem{Aytar2008(1)} Aytar, S., 2008. Rough statistical convergence. Numer. Funct. Anal. Optim. 29(3-4), 291-303.
	
	\bibitem{balcerzak2019} Balcerzak, M., Leonetti, P.,  2019. On the relationship between ideal cluster points and ideal limit points. Topology Appl. 252, 178-190.
	
	\bibitem{Das}  Das, P., 2012. Some further results on ideal convergence in topological spaces. Topology Appl. 159(10-11), 2621-2626.
	
	\bibitem{SGSM1} Ghosal, S., Mandal, S., 2022. The degree of roughness. Topology Appl. 307, 107944.
	
	\bibitem{SGSM2} Ghosal, S., Mandal, S., 2022. Rough weighted $\mathcal{I}$-$\alpha\beta$-statistical convergence in locally solid Riesz spaces. J. Math. Anal. Appl. 506(2), 125681.  
	

	
	
    \bibitem{Leonetti2023} Leonetti, P., 2025. Rough Families, Cluster Points, and Cores. J. Convex Anal. 32(4), 1083-1090.

    \bibitem{Leonetti2026} Leonetti, P. (2026). On rough ideal convergence. arXiv preprint arXiv:2601.13805.

  
    \bibitem{Liu2} Liu, X., Lin, S., Zhou, X., 2025. The first-countability in generalizations of topological groups with ideal convergence. Topology Appl. 360, 109150.

    \bibitem{Liu3} Liu, L., Lin, S., Zhou, X., 2025. On $G_s$-convergence and $G_s$-countable compactness. Topology Appl. 375, 109554.
	
	\bibitem{Pal2013} Pal, S. K., Ch, D., Dutta, S., 2013. Rough ideal convergence. Hacet. J. Math. Stat. 42(6), 633-640.

	
	\bibitem{Phu2001} Phu, H. X., 2001. Rough convergence in normed linear spaces. Numer. Funct. Anal. Optim. 22(1-2), 199-222.
	
	\bibitem{Phu2003} Phu, H. X., 2003. Rough convergence in infinite dimensional normed spaces. Numer. Funct. Anal. Optim. 24 (3-4), 285-301.

    \bibitem{Rahaman} Rahaman, S. A.,  Mursaleen, M., 2024. On rough deferred statistical convergence of difference sequences in L-fuzzy normed spaces. J. Math. Anal. Appl. 530(2), 127684.


	\bibitem{Zhou} Zhou, X., Liu, L., Lin, S., 2020. On topological spaces defined by I-convergence. Bull. Iranian Math. Soc. 46(3), 675-692.
\end{thebibliography}
\end{document}